\documentclass[a4paper,twoside,11pt,reqno]{amsart}
\usepackage{fullpage}
\usepackage[T1]{fontenc}
\usepackage[utf8]{inputenc}

\usepackage{hyperref}
\usepackage[slantedGreeks, partialup, noDcommand]{kpfonts}
\usepackage{graphicx}
\usepackage[mathscr]{euscript}

\hypersetup{ocgcolorlinks=true,allcolors=testc}
\hypersetup{
     colorlinks   = true,
     citecolor    = black
}
\hypersetup{linkcolor=black}
\hypersetup{urlcolor=black}

\newcommand{\eqdef}{\stackrel{\mathrm{def}}{=}}

\usepackage[dvipsnames]{xcolor}

\newtheorem{theorem}{Theorem}
\newtheorem{lemma}[theorem]{Lemma}

\newtheorem{proposition}[theorem]{Proposition}
\newtheorem{corollary}[theorem]{Corollary}

\newtheorem{remark}[theorem]{Remark}
\newtheorem{definition}[theorem]{Definition}
\newtheorem*{definition*}{Definition}

\newcommand*\diff{\mathop{}\!\mathrm{d}}
\newcommand{\R}{\mathbb{R}} 

\newcommand{\N}{\mathbb{N}}

\newcommand{\e}{\varepsilon}

\usepackage{dutchcal}

\newcommand{\MM}{\mathsf{M}}


\newcommand{\mb}{\mathbb}
\newcommand{\ms}{\mathscr}
\newcommand{\msf}{\mathsf}

\newcommand{\mf}{\mathfrak}

\newcommand{\T}{\mathsf{T}}
\newcommand{\s}{\mathsf{S}}

\title{Some geometric applications of the discrete heat flow}

\author{Alexandros Eskenazis}
\address{CNRS, Institut de Math\'ematiques de Jussieu, Sorbonne Universit\'e, France and Trinity College, University of Cambridge, UK.}
\email{alexandros.eskenazis@imj-prg.fr, ae466@cam.ac.uk}


\begin{document}

\maketitle
\vspace{-3mm}

\begin{abstract}
We present two geometric applications of heat flow methods on the discrete hypercube $\{-1,1\}^n$. First, we prove that if $X$ is a finite-dimensional normed space, then the bi-Lipschitz distortion required to embed $\{-1,1\}^n$ equipped with \mbox{the Hamming metric into $X$ satisfies}
$$\mathsf{c}_X\big(\{-1,1\}^n\big) \gtrsim \sup_{p\in[1,2]} \frac{n}{\mathsf{T}_p(X) \min\{n,\mathrm{dim}(X)\}^{1/p}},$$
where $\mathsf{T}_p(X)$ is the Rademacher type $p$ constant of $X$. This estimate yields a mutual refinement of distortion lower bounds which follow from works of Oleszkiewicz (1996) and Ivanisvili, van Handel and Volberg (2020) for low-dimensional spaces $X$. The proof relies on an extension of an important inequality of Pisier (1986) on the biased hypercube combined with an application of the Borsuk--Ulam theorem from algebraic topology. Secondly, we introduce a new metric invariant called metric stable type as a functional inequality on the discrete hypercube and prove that it coincides with the classical linear notion of stable type for normed spaces.  We also show that metric stable type yields bi-Lipschitz nonembeddability estimates for weighted hypercubes.
\end{abstract}

\bigskip

{\footnotesize
\noindent {\em 2020 Mathematics Subject Classification.} Primary: 46B85; Secondary: 30L15, 42C10,  46B07.

\noindent {\em Key words.} Hamming cube,  Rademacher type, metric embeddings, Borsuk--Ulam theorem, stable type.}



\section{Introduction}

Let $\{-1,1\}^n$ be the $n$-dimensional discrete hypercube equipped with the Hamming metric
\begin{equation}
\forall\ x,y\in\{-1,1\}^n,\qquad \rho(x,y) = \frac{1}{2} \sum_{i=1}^n |x(i)-y(i)|,
\end{equation}
where $x=(x(1),\ldots,x(n))$ and $y=(y(1),\ldots,y(n))$.  The purpose of the present paper is to investigate certain metric properties of the Hamming cube via heat flow methods.


\subsection{Dimensionality of Hamming metrics}

If $(\MM,d_\MM)$ is a metric space and $(Y,\|\cdot\|_Y)$ is a normed space,  we say that $\MM$ embeds into $Y$ with bi-Lipschitz distortion at most $D\in[1,\infty)$, if there exists a mapping $f:\MM\to Y$ satisfying
\begin{equation} \label{eq:bi-Lip}
\forall \ p,q\in\MM,\qquad d_\MM(p,q) \leq \|f(p)-f(q)\|_Y \leq D d_\MM(p,q).
\end{equation}
The least $D\geq1$ for which such an embedding exists will be denoted by $\msf{c}_Y(\MM)$. The rapidly growing field of metric dimension reduction aims to uncover conditions under which given families of metric spaces admit (or do not admit) embeddings into low-dimensional normed spaces with prescribed properties.  Without attempting to survey this vast area,  we note that important contributions have been made on low-dimensional embeddings of finite subsets of Hilbert space \cite{JL84},  arbitrary finite metric spaces \cite{JLS87,AdRRP92,Mat92,Mat96},  discrete hypercubes \cite{Ole96,LMN05},  diamond graphs \cite{BC05,LN04,NPS20},  Laakso graphs \cite{GKL03,LMN05}, ultrametric spaces \cite{BM04}, series-parallel graphs \cite{BKL07}, recursive cycle graphs \cite{ACNN11}, Heisenberg-type metrics \cite{LN14,NY22},  $\ell_p$ variants of thin Laakso structures \cite{BGN15,BSS21} and expander graphs \cite{Nao17,Nao21}. We refer to the survey \cite{Nao18} for more bibliographic information \mbox{and to \cite{Ind01,Lin02,Vem05,AIR18} for a sample of algorithmic applications.}

The first to study the bi-Lipschitz embeddability of hypercubes into normed spaces was Enflo.  In the seminal work \cite{Enf69},  he introduced the notion of {\it roundness} of a metric space and used it to show that any embedding of the Hamming cube $\{-1,1\}^n$ into an $L_p(\mu)$ space, where $p\in[1, 2]$,  incurs bi-Lipschitz distortion at least $n^{1-1/p}$ (see also \cite{Enf70, Enf78} for additional early results along these lines). More specifically, Enflo proved that if $p\in[1,2]$, then any mapping of the form $f:\{-1,1\}^n\to L_p(\mu)$ satisfies the estimate
\begin{equation} \label{eq:enflo}
\int_{\{-1,1\}^n} \big\| f(x)-f(-x)\big\|_{L_p(\mu)}^p \,\diff\sigma_n(x) \leq \sum_{i=1}^n \int_{\{-1,1\}^n} \big\|f(x)-f\big(x(1),\ldots,-x(i),\ldots,x(n)\big)\big\|_{L_p(\mu)}^p\,\diff\sigma_n(x),
\end{equation}
where $\sigma_n$ is the uniform probability on $\{-1,1\}^n$.  This readily implies that if $f$ has bi-Lipschitz distortion $D$, then $D\geq n^{1-1/p}$. In the follow-up work \cite{Enf78}, he raised an influential problem by asking for which normed spaces $(X,\|\cdot\|_X)$, inequality \eqref{eq:enflo} is satisfied for $X$-valued functions $f$ up to a multiplicative constant $T$, independent of the choice of $f$ or the dimension $n$. Restricting this requirement to linear functions $f(x) = \sum_{i=1}^n x_i v_i$, one recovers the necessary condition
\begin{equation} \label{eq:type}
\int_{\{-1,1\}^n} \Big\| \sum_{i=1}^n x_i v_i\Big\|_X^p\,\diff\sigma_n(x) \leq T^p \sum_{i=1}^n \|v_i\|_X^p,
\end{equation}
which must be satisfied for every $n\in\N$ and vectors $v_1,\ldots,v_n\in X$. If a normed space $X$ satisfies \eqref{eq:type}, we say that $X$ has \emph{Rademacher type} $p$ and the least constant $T$ is denoted by $\T_p(X)$. After decades of substantial efforts (see \cite{BMW86, Pis86,NS02,HN13,Esk21}),   Ivanisvili, van Handel and Volberg resolved Enflo's problem in the breakthrough work \cite{IVV20} by proving the sufficiency of this condition, namely that any normed space of Rademacher type $p$ also has Enflo's nonlinear type $p$. Consequently, any bi-Lipschitz embedding of $\{-1,1\}^n$ into a normed space $X$ of Rademacher type $p$ incurs distortion at least a constant multiple of $\T_p(X)^{-1}n^{1-1/p}$. We note in passing that, conversely, a classical theorem of Pisier \cite{Pis73} implies that if $X$ does not have type $p$ for any $p>1$, then $\{-1,1\}^n$ embeds into $X$ with bi-Lipschitz distortion at most $1+\e$, for any $\e>0$.

Independently of this line of research, the beautiful (but perhaps overlooked) work \cite{Ole96} of Oleszkiewicz established a nonembeddability result for discrete hypercubes in the context of dimensionality reduction. Following Ball, Carlen and Lieb \cite{BCL94}, we say that a normed space is $p$-{\it uniformly smooth}, where $p\in[1,2]$, if there exists a constant $S>0$ such that
\begin{equation} \label{eq:smooth}
\forall \ x,y\in X,\qquad \frac{\|x\|_X^p+\|y\|_X^p}{2}\leq \Big\|\frac{x+y}{2}\Big\|_X^p +S^p \Big\|\frac{x-y}{2}\Big\|_X^p;
\end{equation}
the least such constant $S$ is denoted by $\s_p(X)$.  A well-known tensorization argument due to Pisier \cite{Pis75} shows that $\T_p(X) \leq \s_p(X)$, yet there exist examples of normed spaces $X$ for which $\T_p(X)<\infty$ whereas $\s_p(X)=\infty$ for $p\in(1,2]$, see \cite{Pis75b,Jam78,PX87}. The main result of Oleszkiewicz's paper \cite{Ole96} asserts that the distortion required to embed $\{-1,1\}^n$ into a finite-dimensional normed space $X$ satisfies
\begin{equation} \label{eq:ole}
\msf{c}_X\big(\{-1,1\}^n\big) \geq \sup_{p\in[1,2]} \frac{n}{\s_p(X) \min\{n,\dim(X)\}^{1/p}}
\end{equation}
(see also \cite{BG81} for a precursor of this result for linear embeddings). This substantially improves the bound $\msf{c}_X(\{-1,1\}^n)\gtrsim \sup_{p\in[1,2]} \T_p(X)^{-1}n^{1-1/p}$ which follows from \cite{IVV20}, at least for spaces $X$ with $p$-smoothness constant $\s_p(X)\asymp 1$ and dimension $\dim(X)<\!\!\!< n$.

The first goal of the present paper is to revisit the technique used for Oleszkiewicz's nonembeddability theorem \cite{Ole96}, in particular proving the following mutual refinement of his result and of the recent work of Ivanisvili, van Handel and Volberg \cite{IVV20}.

\begin{theorem} \label{thm:main}
Let $(X,\|\cdot\|_X)$ be a finite-dimensional normed space.  Then, for any $n\geq1$, we have
\begin{equation} \label{eq:main}
\msf{c}_X\big(\{-1,1\}^n\big) \gtrsim \sup_{p\in[1,2]} \frac{n}{\T_p(X) \min\{n,\dim(X)\}^{1/p}}
\end{equation}
\end{theorem}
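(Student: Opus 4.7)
\medskip

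\noindent\textbf{Proof plan.} The plan is to deduce Theorem~\ref{thm:main} from the following Enflo-type inequality: for every $p\in[1,2]$ and every $f:\{-1,1\}^n\to X$,
\begin{equation}\label{eq:plan-target}
\int_{\{-1,1\}^n} \|f(x)-f(-x)\|_X^p\,\diff\sigma_n(x) \lesssim \T_p(X)^p\,\min\{n,\dim(X)\}\,\max_{1\le i\le n}\int_{\{-1,1\}^n}\big\|f(x)-f(x\oplus e_i)\big\|_X^p\,\diff\sigma_n(x).
\end{equation}
For any bi-Lipschitz embedding $f$ of distortion $D$, the left-hand side is at least $n^p$ (since $\rho(x,-x)=n$) while each term on the right is at most $D^p$, so \eqref{eq:plan-target} yields $D\gtrsim n/(\T_p(X)\min\{n,\dim(X)\}^{1/p})$. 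When $\dim(X)\geq n$, inequality \eqref{eq:plan-target} already follows from the Ivanisvili-van Handel-Volberg solution of Enflo's problem \cite{IVV20} (one replaces the sum by $n$ times the maximum). The whole content of the theorem is thus to establish \eqref{eq:plan-target} in the regime $\dim(X)<n$, where the factor $\dim(X)$ must beat the naive $n$.

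To this end, I would first represent $f(x)-f(-x)$ via the noise semigroup $\{P_\rho\}_{\rho\in[-1,1]}$ on the hypercube (so that $P_1f=f$ and $P_{-1}f(x)=f(-x)$). A Littlewood-Paley-type integration in $\rho$ reduces \eqref{eq:plan-target} to controlling vector-valued $L^p$ norms of Rademacher averages of the discrete partial derivatives $\partial_i P_\rho f$ under the correlated product measure appearing along the heat flow, i.e.\ on a biased hypercube whose bias is governed by $\rho$.

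The main analytic step would be to prove an extension of Pisier's \cite{Pis86} inequality to the biased hypercube, with constants depending on the bias (equivalently, on $\rho$) rather than on $\log n$, in the spirit of the semigroup-based avoidance of the logarithmic loss in \cite{IVV20}. Applied together with the Rademacher type-$p$ inequality to the decomposition from the previous paragraph, this step by itself would recover \eqref{eq:plan-target} with a factor of $n$, i.e.\ the IVV bound. The final improvement from $n^{1/p}$ to $\min\{n,\dim(X)\}^{1/p}$ would come from a Borsuk-Ulam argument: at each $x\in\{-1,1\}^n$, the $n$ vectors $\{\partial_i P_\rho f(x)\}_{i=1}^n$ lie in the $d$-dimensional space $X$, so when $n>d$ the Borsuk-Ulam theorem produces a nontrivial sign configuration witnessing cancellations among them. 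Inserted inside the Rademacher type-$p$ inequality, such cancellations replace $\sum_{i=1}^n\|\partial_i P_\rho f(x)\|_X^p$ by a quantity comparable to $\dim(X)\cdot\max_i\|\partial_i P_\rho f(x)\|_X^p$, delivering the desired topological gain.

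The main obstacle I anticipate is the coherent combination of the three ingredients: the noise semigroup smooths $f$ but pushes us into a biased regime in which vector-valued estimates degrade; the extended Pisier inequality must absorb this degradation without reintroducing a logarithmic loss; and the Borsuk-Ulam step is inherently pointwise, so integrating its conclusion against the biased measure without forfeiting the topological gain requires care. The delicate point will be choosing the bias parameter $\rho$ (equivalently, a heat-flow time) so that the Pisier-type loss in $\rho$ precisely balances the Borsuk-Ulam gain of a factor of $\dim(X)/n$, which is presumably the reason the argument requires the extension of Pisier's inequality to the biased hypercube rather than its symmetric form.
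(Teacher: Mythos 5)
Your target inequality \eqref{eq:plan-target} is false as a functional inequality. Take $X=\R$, $p=2$, and $f(x)=\sum_{i=1}^n x(i)$: the left-hand side is $\int|2\sum_i x(i)|^2\,\diff\sigma_n = 4n$, while the right-hand side is $\T_2(\R)^2\cdot\min\{n,1\}\cdot\max_i\int|2x(i)|^2\,\diff\sigma_n = 4$, which fails for every $n\geq 2$. More fundamentally, the replacement of $\sum_i\|v_i\|_X^p$ by $\dim(X)\cdot\max_i\|v_i\|_X^p$ inside the Rademacher type inequality cannot be achieved by any cancellation: when all $v_i$ equal a fixed $v$, the Rademacher average $\int\|\sum_i\delta_iv_i\|_X^p\,\diff\sigma_n(\delta)$ is of order $n^{p/2}\|v\|_X^p$, which overwhelms $d\|v\|_X^p$ once $n$ is large; no sign configuration can shrink an average that is computed over all signs. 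Also, the ``pointwise Borsuk--Ulam'' step is not a meaningful move: the Borsuk--Ulam theorem requires a continuous odd map on a sphere, not a finite family of vectors in $\R^d$ at a fixed $x$.

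The paper's argument has a different logical scaffolding. It applies Borsuk--Ulam \emph{once and globally} to the multilinear extension $F:[-1,1]^n\to X=\R^d$ of $f$, composed with an odd continuous map $\mathbb{S}^d\to\ms{C}_d^n$ into the $d$-skeleton of the solid cube (Lemmas~\ref{lem:ole} and~\ref{lem:ole2}), producing a point $z\in\ms{C}_d^n$ with $F(z)=F(-z)$. This $z$ singles out a $d$-dimensional subcube and a bias vector $\pmb{\alpha}_z$ such that the restrictions $h_\pm$ of $f$ to that subcube and its antipode have equal $\mu_{\pmb{\alpha}_z}$-means (both equal $F(z)$). The biased Poincar\'e inequality (Theorem~\ref{thm:poincare}, proved via the new biased heat-flow identity of Proposition~\ref{prop:ident}) is then applied \emph{on the $d$-dimensional subcube}, where the derivative sum has only $d$ terms: this, not any cancellation among $n$ vectors, is the source of the $\dim(X)$ factor. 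The bi-Lipschitz hypothesis enters only afterwards: the lower bound still forces $\|h_+(x)-h_-(x)\|_X\geq n$ (because $x$ and $-x$ are at Hamming distance $n$ in the full cube), while each of the $d$ biased derivative terms is controlled by $D$ via the upper Lipschitz bound, giving $n^p\lesssim\T_p(X)^p\,d\,D^p$. So the output is a distortion estimate obtained from a Poincar\'e inequality for a \emph{restriction} of $f$ (see item~1 of Section~\ref{sec:disc} of the paper), not a global inequality of the form \eqref{eq:plan-target}. Your instincts about the biased heat semigroup and an accompanying biased Pisier-type inequality are on target and match the paper; the Borsuk--Ulam mechanism and the role of the bi-Lipschitz structure are not.
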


We emphasize that, in contrast to Oleszkiewicz's bound \eqref{eq:ole}, Theorem \ref{thm:main} captures more accurately the nonembeddability of the hypercube into (finite-dimensional subspaces of) normed spaces which have Rademacher type $p$ but are not $r$-smooth for any $r\in(1,2]$, see \cite{Jam78,PX87}.


\subsubsection*{About the proof} 

Theorem \ref{thm:main} is proven by a combination of semigroup tools with a clever topological trick of \cite{Ole96}. More specifically, let $f:\{-1,1\}^n\to X$ be a function, where $X$ is a $d$-dimensional normed space and $d<n$. An application of the Borsuk--Ulam theorem \cite{Mat03} for the unique multilinear extension of $f$ implies that there exists a subset $\sigma\subseteq\{1,\ldots,n\}$ with $|\sigma|=d$, a \emph{product} measure $\nu$ on $\{-1,1\}^{\sigma}$ and a point $w\in\{-1,1\}^{\sigma^c}$ such that
\begin{equation} \label{eq:top-info}
\int_{\{-1,1\}^\sigma} f(x,w) \,\diff\nu(x) = \int_{\{-1,1\}^\sigma} f(-x,-w) \,\diff\nu(x).
\end{equation}
Then, a Poincar\'e inequality \`a la Enflo for the product measure $\nu$ (instead of the uniform measure $\sigma_d$) on the $d$-dimensional subcubes $\{-1,1\}^\sigma\times\{w\}$ and $\{-1,1\}^\sigma\times\{-w\}$ yields the distortion bounds of Theorem \ref{thm:main} (see Theorem \ref{thm:poincare} and also equation \eqref{eq:real-conclusion} below).

In the case of $p$-uniformly smooth spaces, Oleszkiewicz \cite{Ole96} used \eqref{eq:top-info} and a bootstrap argument for the Lipschitz constant of $f$, based on the two-point inequality \eqref{eq:smooth}, to obtain \eqref{eq:ole}. In our case, the biased Poincar\'e inequality which will yield \eqref{eq:main} is an extension of an inequality for the uniform measure that was proven in \cite{IVV20}. The key technical contribution of \cite{IVV20} was a novel representation of the time derivative of the heat flow on $\{-1,1\}^n$. Instead, we consider a Markov process having the product measure $\nu$ as stationary measure (see Section \ref{sec:prel}) and prove a formula for the time derivative of the corresponding semigroup (see Proposition \ref{prop:ident}) which extends the formula of \cite{IVV20} (see also \eqref{eq:ivv} below). Due to the fact that our product measure $\nu$ is no longer the stationary measure of the random walk on a group, the resulting identity lacks some homogeneity properties that were used in \cite{IVV20}, but nevertheless it is sufficient for the proof of the biased Poincar\'e inequality which is needed for our geometric application.

\subsection{A Pisier--Talagrand inequality on the biased cube} 

For $\alpha\in(0,1)$ we denote by $\mu_\alpha$ the $\alpha$-biased probability measure on $\{-1,1\}$ with $\mu_\alpha\{1\}=\alpha$ and $\mu_\alpha\{-1\}=1-\alpha$. The proof of Theorem \ref{thm:main} yields as a consequence the following extension and refinement of Pisier's inequality \cite{Pis86}. 

\begin{theorem} \label{thm:pisier}
For every $p\in[1,\infty)$ and $\alpha\in(0,1)$, there exist $\msf{K}_{p,\alpha}, \msf{C}_\alpha\in(0,\infty)$ such that the following holds.  For any normed space $(X,\|\cdot\|_X)$ and any $n\in\N$, every function $f:\{-1,1\}^n\to X$ satisfies
\begin{equation} \label{eq:p1}
\begin{split}
\bigg\| f - \int_{\{-1,1\}^n} f\,\diff\mu_\alpha^n\bigg\|_{L_p(\log L)^{p/2}(\mu_\alpha^n;X)} \leq \msf{K}_{p,\alpha} & \left( \int_{\{-1,1\}^n} \Big\| \sum_{i=1}^n \delta_i \partial_i^\alpha f\Big\|_{L_p(\mu_\alpha^n;X)}^p \,\diff\sigma_n(\delta)\right)^{1/p}
\\ & + \msf{C}_{\alpha}(\log n+1) \int_{\{-1,1\}^n} \Big\|\sum_{i=1}^n\delta_i \partial_i^\alpha f\Big\|_{L_1(\mu_\alpha^n;X)}\,\diff\sigma_n(\delta).
\end{split}
\end{equation}
If additionally $X$ is assumed to be of finite cotype, then there exists $\msf{K}_{p,\alpha}(X)\in(0,\infty)$ such that
\begin{equation} \label{eq:p2}
\bigg\| f - \int_{\{-1,1\}^n} f\,\diff\mu_\alpha^n\bigg\|_{L_p(\log L)^{p/2}(\mu_\alpha^n;X)} \leq \msf{K}_{p,\alpha}(X)  \left( \int_{\{-1,1\}^n} \Big\| \sum_{i=1}^n \delta_i \partial_i^\alpha f\Big\|_{L_p(\mu_\alpha^n;X)}^p \,\diff\sigma_n(\delta)\right)^{1/p}.
\end{equation}
\end{theorem}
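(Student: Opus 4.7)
The plan is to prove (p1) by following the heat-semigroup scheme of \cite{IVV20}, applied to the biased Markov semigroup $(P_t^\alpha)_{t\ge 0}$ on $\{-1,1\}^n$ whose stationary measure is $\mu_\alpha^n$ and that is constructed in Section \ref{sec:prel}. Writing $L_\alpha$ for its generator, the starting point is the standard representation
\begin{equation*}
f(x) - \int f\,\diff\mu_\alpha^n \;=\; \int_0^\infty (-L_\alpha)P_t^\alpha f(x)\,\diff t.
\end{equation*}
The crucial input is the time-derivative identity of Proposition \ref{prop:ident} (the biased extension of the formula of \cite{IVV20}), which I would use to express $(-L_\alpha)P_t^\alpha f(x)$ as an expectation, over an auxiliary uniformly distributed sign vector $\delta\in\{-1,1\}^n$ and over an $(x,t,\delta)$-dependent random point of $\{-1,1\}^n$, of a single pointwise evaluation of the biased gradient $\sum_i\delta_i\partial_i^\alpha f$. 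In contrast with the uniform case, this identity is not homogeneous in $x$, but it is compatible with passing vector-valued norms inside the expectations, and the resulting $\alpha$-dependent weights remain bounded on compact subsets of $(0,1)$.

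With the representation in hand, I would estimate the Orlicz norm $\|(-L_\alpha)P_t^\alpha f\|_{L_p(\log L)^{p/2}(\mu_\alpha^n;X)}$ in two time regimes. For short times $t\lesssim 1$, a smoothing bound derived from the biased hypercontractive inequality (in the spirit of the sharp two-point estimate of \cite{Ole96}) lets one dominate the Orlicz norm by an integrable-in-$t$ multiple of $\bigl(\int\|\sum_i\delta_i\partial_i^\alpha f\|_{L_p(\mu_\alpha^n;X)}^p\,\diff\sigma_n(\delta)\bigr)^{1/p}$. For long times $t\gtrsim 1$, the spectral gap produces an exponentially decaying prefactor $e^{-c_\alpha t}$, while the crudest $L_\infty$-to-$L_p(\log L)^{p/2}$ estimate available only gives control by the $L_1$-average $\int\|\sum_i\delta_i\partial_i^\alpha f\|_{L_1(\mu_\alpha^n;X)}\,\diff\sigma_n(\delta)$. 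Integrating these two estimates against $\diff t$ after cutting at $t_0\asymp\log n$ yields (p1): the $\log n$ prefactor on the $L_1$ summand reflects the length of the intermediate time window over which short-time smoothing has become too weak and one must already rely on the $L_1$-type bound.

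Finally, (p2) is deduced from (p1) when $X$ has finite cotype by invoking Pisier's theorem that finite cotype implies $K$-convexity. For $K$-convex $X$, the biased noise operator $P_t^\alpha$ is uniformly bounded, with norm depending only on $X$ and $p$, on the Rademacher-type subspace of $L_p(\mu_\alpha^n;X)$ spanned by first-level biased Walsh functions tensored with $X$. This lets one replace the cheap $L_1$ bound for $(-L_\alpha)P_t^\alpha f$ in the long-time regime by an $L_p$ bound with an $X$-dependent constant, removing the logarithmic cutoff and producing (p2). The main technical obstacle I anticipate is the derivation and exploitation of Proposition \ref{prop:ident} itself: the lack of group invariance of $\mu_\alpha^n$ prevents the spectral computation that underpins the uniform case, so the time-derivative formula must be obtained from a careful tracking of an auxiliary Markov chain along the biased semigroup, and the ensuing loss of homogeneity has to be absorbed by the biased hypercontractive estimate with its correct $\alpha$-dependence.
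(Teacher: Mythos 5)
Your proposal takes a different route from the paper's, and I think it has a genuine gap in the way the Orlicz norm is handled.

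The paper does not estimate the Orlicz norm $\|{-\ms{L}_\alpha} P_t^\alpha f\|_{L_p(\log L)^{p/2}}$ directly along the semigroup flow. It proceeds in two separate steps. First it proves Theorem~\ref{thm:pisier-weak}: the same two inequalities but with the plain $L_p(\mu_\alpha^n;X)$ norm on the left-hand side. This is where the semigroup machinery and Proposition~\ref{prop:ident} are used. Second, it upgrades the $L_p$ bound to the $L_p(\log L)^{p/2}$ bound by an external device: Talagrand's \emph{scalar} $L_p\log L$ inequality on the biased cube \cite[Prop.~5.1]{Tal93}, which controls $\|g-\int g\|_{L_p(\log L)^{p/2}}$ by the (asymmetric) scalar square function $\msf{M}g$ plus the $L_1$ norm, combined with the pointwise inequality from \cite{CE24} stating $\msf{M}\|f\|_X(x)\lesssim (\int\|\sum_i\delta_i\partial_i f(x)\|_X^p\,\diff\sigma_n)^{1/p}$ and a final contraction-principle step to swap $\partial_i$ for $\partial_i^\alpha$. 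Without this factorization through Talagrand's scalar result, I do not see how your claimed short-time estimate — a hypercontractive smoothing that dominates the \emph{Orlicz} norm by an integrable-in-$t$ multiple of the $L_p$ gradient — can be justified; $L_p(\log L)^{p/2}$ is strictly stronger than $L_p$, and hypercontractivity alone does not manufacture the logarithmic gain.

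Two further points. (i) Your account of where the $\log n$ factor comes from does not match the computation. In the paper's proof of \eqref{eq:pw1}, the maximum of $|\eta_t^\alpha|$ grows like $1/t$ as $t\to0^+$, which is \emph{not} integrable at $0$; the proof therefore first uses the bound $\|f\|_{L_p}\leq e^2\|P_{1/n}^\alpha f\|_{L_p}$ to push the lower limit of the $t$-integral to $1/n$, and then $\int_{1/n}^1 \diff t/t \asymp \log n$. So the logarithm comes from the short-time singularity of the representation formula, not from an intermediate time window. (ii) For \eqref{eq:p2}, the paper does not invoke K-convexity or uniform boundedness of $P_t^\alpha$ on a Rademacher-type subspace (Pisier's original \cite{Pis86} strategy). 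Instead it uses the Maurey--Pisier comparison principle for i.i.d.\ centered random variables (\cite[Prop.~9.14]{LT91}), after splitting the index set by $I(x)=\{i:x(i)=1\}$ so that the relevant sums have genuinely i.i.d.\ weights. Finite cotype enters through that comparison principle, and the key gain is that the relevant $r$-th moment of $\eta_t^\alpha$ decays like $t^{-(1-1/r)}$ for $r$ large enough depending on the cotype, which is integrable at $0$; that is how the $\log n$ cutoff is removed. Your K-convexity route could in principle be made to work, but it is not what the paper does and it would require adapting Pisier's analytic-semigroup argument to the non-group-invariant biased semigroup, which is a nontrivial extra step you have not addressed.
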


The (standard) definitions of Orlicz norms, cotype and discrete derivatives will be given at the main part of the article.  Theorem \ref{thm:pisier} is an optimal vector-valued version of a deep logarithmic Sobolev inequality of Talagrand \cite{Tal93}.  In the case of the uniform measure which corresponds to $\alpha=\tfrac{1}{2}$, Theorem \ref{thm:pisier} was proven recently in \cite{CE24}. 


\subsection{Metric stable type}

Besides its interest in the context of embedding theory, the work \cite{IVV20} of Ivanisvili, van Handel and Volberg was a major contribution to a long-standing research program in nonlinear functional analysis called the \emph{Ribe program}.  Put simply, the foundational idea of the program (as put forth by Bourgain \cite{Bou86}, who was inspired by a landmark rigidity theorem of Ribe \cite{Rib76}) is that isomorphic local properties of normed spaces can be equivalently reformulated using only distances between pairs of points, with no reference to the spaces' linear structure (see also \cite{Nao12,Bal13}). In this sense,  the main result of \cite{IVV20} completes the Ribe program for Rademacher type by proving that it is equivalent to nonlinear Enflo type.

Among the various isomorphic properties studied in the local theory of normed spaces,  \emph{stable type} (see \cite{Pis74a,Pis74b}) plays a prominent role, in particular due to its relation to the problem of identifying $\ell_p^n$ subspaces of Banach spaces \cite{Pis83}.   Recall that a symmetric random variable $\theta$ is distributed according to the standard $p$-stable law, where $p\in(0,2]$, if it satisfies $\mb{E} \exp(it\theta) = \exp(-|t|^p)$ for every $t\in\R$.  A Banach space $(X,\|\cdot\|_X)$ is said to have stable type $p$ if for every $r<p$, every $n\in\N$ and every vectors $v_1,\ldots,v_n\in X$,  we have 
\begin{equation}
\Big( \mb{E} \Big\| \sum_{i=1}^n \theta_i v_i\Big\|_X^r \Big)^{1/r} \leq \msf{C}_{p,r}(X) \Big( \sum_{i=1}^n \|v_i\|_X^p\Big)^{1/p},
\end{equation}
where the constant $\msf{C}_{p,r}(X)$ depends only on $p, r$ and $X$, but not on $n$ or the choice of $v_1,\ldots,v_n$, and $\theta_1,\theta_2,\ldots$ are independent standard symmetric $p$-stable random variables. 

In the special case $p=2$, $p$-stable random variables are Gaussians and thus stable type 2 coincides with Rademacher type 2.  Moreover, any space with stable type $p\in[1,2)$ also has Rademacher type $p$ but the converse does not necessarily hold.  In fact,  early results of Maurey and Pisier in the theory of type (see \cite[Proposition~3]{Pis74a} and \cite[Th\'eor\`eme~1]{Pis74b}), show that a normed space $X$ has stable type $p\in[1,2)$ if and only if $X$ has Rademacher type $p+\e$ for some $\e>0$. Therefore, formally, the Ribe program for stable type $p\in[1,2)$ has been completed by \cite{IVV20}: a normed space $X$ has stable type $p$ if and only if $X$ has Enflo type $p+\e$ for some $\e>0$.  In other words,  stable type $p$ of a normed space $X$ can be characterized by the validity of some, a priori unknown, $X$-valued Poincar\'e-type inequality from a given family.

To amend this, we shall introduce a new nonlinear invariant called \emph{metric stable type} which is again a Poincar\'e-type inequality for functions defined on the discrete hypercube and will serve as a metric characterization of stable type for normed spaces.  If $(\MM,d_\MM)$ is a metric space and $f:\{-1,1\}^n\to \MM$ is an $\MM$-valued function, we denote
\begin{equation}
\forall \ x\in\{-1,1\}^n, \qquad  \mf{d}_if(x) \eqdef \tfrac{1}{2}d_\MM\big(f(x), f\big(x(1),\ldots,-x(i),\ldots,x(n)\big)\big).
\end{equation}
Moreover,  we shall denote the weak $\ell_p$ norm on $\R^n$ by
\begin{equation}
\forall \ w\in\R^n,\qquad \|w\|_{\ell_{p,\infty}^n} \eqdef \sup_{r\geq0} \big\{ r\cdot \# \{i: \ |w_i|\geq r\}^{1/p}\big\},
\end{equation}
where $\# S$ denotes the cardinality of a finite set $S$.

\begin{definition}
A metric space $(\MM,d_\MM)$ has metric stable type $p\in(0,2)$ with constant $S\in(0,\infty)$ if for any $n\in\N$, every function $f:\{-1,1\}^n\to\MM$ satisfies
\begin{equation} \label{eq:st}
\int_{\{-1,1\}^n} d_\MM\big( f(x),f(-x)\big)^p \,\diff\sigma_n(x) \leq S^p  \int_{\{-1,1\}^n} \big\| \big(\mf{d}_1f(x),\ldots,\mf{d}_nf(x)\big)\big\|^p_{\ell_{p,\infty}^n} \,\diff\sigma_n(x).
\end{equation}
\end{definition}

Since $\|\cdot\|_{\ell_{p,\infty}^n} \leq\|\cdot\|_{\ell_p^n}$ by Markov's inequality,  any space with metric stable type $p$ also has Enflo type $p$ with the same constant.  We shall prove the following metric characterization.

\begin{theorem} \label{thm:stable}
A normed space $X$ has metric stable type $p\in[1,2)$ if and only if $X$ has stable type $p$.
\end{theorem}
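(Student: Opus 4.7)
The plan is to prove the two implications separately. The forward direction is essentially a one-line reduction of \eqref{eq:st} to a classical linear characterization of stable type, while the backward direction is the substantive one and combines Theorem~\ref{thm:pisier} with the same characterization.

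For the forward implication, the plan is to test \eqref{eq:st} on the linear map $f(x)=\sum_{i=1}^n x(i)v_i$ with arbitrary $v_1,\ldots,v_n\in X$. A direct computation yields $f(x)-f(-x)=2\sum_{i=1}^n x(i)v_i$ and $\mf{d}_if(x)=\|v_i\|_X$ (independently of $x$), so \eqref{eq:st} degenerates to the weak Rademacher type $p$ inequality
\begin{equation*}
\Big(\mb{E}\Big\|\sum_{i=1}^n \varepsilon_iv_i\Big\|_X^p\Big)^{1/p} \leq \tfrac{S}{2}\,\big\|(\|v_i\|_X)_{i=1}^n\big\|_{\ell_{p,\infty}^n}.
\end{equation*}
A classical characterization going back to Pisier~\cite{Pis74a,Pis74b} and Maurey--Pisier theory identifies this inequality with stable type $p$ throughout $p\in[1,2)$, which concludes this direction.

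For the backward implication, assume $X$ has stable type $p\in[1,2)$. The same classical equivalence supplies the weak Rademacher inequality for $X$. Moreover, stable type $p\geq1$ forces $X$ to have nontrivial Rademacher type (Maurey--Pisier), hence to be $K$-convex and in particular of finite cotype, so the log-free form \eqref{eq:p2} of Theorem~\ref{thm:pisier} is available with $\alpha=\tfrac{1}{2}$. For an arbitrary $f:\{-1,1\}^n\to X$, combining \eqref{eq:p2}, the continuous inclusion $L_p(\log L)^{p/2}(\sigma_n;X)\hookrightarrow L_p(\sigma_n;X)$, and the standard observation that the $L_p(\sigma_n;X)$-norm of $f(x)-f(-x)$ is at most twice that of $f-\mb{E}f$ (by the triangle inequality and the symmetry of $\sigma_n$ under $x\mapsto -x$), I aim to bound $\big(\int\|f(x)-f(-x)\|_X^p\,\diff\sigma_n(x)\big)^{1/p}$ by a constant multiple of $\big(\int\big\|\sum_i\delta_i\partial_if\big\|_{L_p(\sigma_n;X)}^p\,\diff\sigma_n(\delta)\big)^{1/p}$. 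Fubini's theorem and a \emph{pointwise} application of the weak Rademacher inequality to $v_i=\partial_if(x)$ will then bound this by a constant times $\big(\int\big\|(\|\partial_if(x)\|_X)_{i=1}^n\big\|_{\ell_{p,\infty}^n}^p\,\diff\sigma_n(x)\big)^{1/p}$, which equals the right-hand side of \eqref{eq:st} since $\|\partial_if(x)\|_X=\mf{d}_if(x)$ in a normed space.

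The crux of the argument is the log-free form of Pisier's inequality: the original Pisier bound with its $\log n$ factor would destroy the dimension-free character of \eqref{eq:st} and would not suffice for the identification with stable type. Producing Theorem~\ref{thm:pisier}\eqref{eq:p2}, and verifying that the finite-cotype hypothesis it requires is automatic under stable type, is precisely the nontrivial input this result extracts from the rest of the paper.
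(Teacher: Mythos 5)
Your proof is correct, and it follows what the paper itself does for $p=1$, extended verbatim to all $p\in[1,2)$. The forward direction (linear test functions reduce \eqref{eq:st} to the weak-$\ell_{p,\infty}$ Rademacher inequality of Lemma~\ref{lem:lt}) is exactly the paper's one-liner. Your backward direction — finite cotype from stable type via Maurey--Pisier, then a dimension-free $X$-valued Pisier inequality at $\alpha=\tfrac12$, then Fubini and a pointwise application of Lemma~\ref{lem:lt} to the vectors $v_i=\partial_if(x)$ — is the paper's $p=1$ argument, and the paper explicitly remarks that "this proof extends to all values of $p$."

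The one place you diverge from the paper is that for $p>1$ the paper deliberately avoids the $X$-valued Pisier inequality and instead runs the IVV semigroup identity directly, symmetrizes, applies Lemma~\ref{lem:lt} conditionally, and then uses the comparison principle \cite[Proposition~9.14]{LT91} together with the fact that $\ell_{p,\infty}^n$ has finite cotype uniformly in $n$ when $p>1$. The payoff of that more cumbersome route is quantitative: it yields a metric stable type constant proportional to $\msf{ST}_p(X)$ up to factors depending only on $p$, whereas your argument (like the paper's $p=1$ one) produces a constant that also carries the K-convexity/cotype parameters of $X$ hidden inside $\msf{K}_{p,1/2}(X)$ of \eqref{eq:p2}. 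For the qualitative equivalence asserted in Theorem~\ref{thm:stable} this distinction is immaterial, so your proof is complete; it just does not recover the sharper constant dependence noted at the end of the paper's argument. A cosmetic remark: rather than invoking \eqref{eq:p2} and then downgrading the Orlicz norm, you could equivalently cite the $L_p$ form \eqref{eq:pw2} of Theorem~\ref{thm:pisier-weak} directly, which is the tool the paper actually reaches for.
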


Pisier's K-convexity theorem \cite{Pis82}, asserts that a Banach space $X$ is K-convex if and only if it has Rademacher type $p$ for some $p>1$. In view of the aforementioned characterization of stable type \cite{Pis74a, Pis74b}, this is equivalent to $X$ having stable type 1 and thus we derive the following metric characterization of $K$-convexity by a Poincar\'e inequality.

\begin{corollary}
A normed space $X$ is K-convex if and only if $X$ has metric stable type $1$.
\end{corollary}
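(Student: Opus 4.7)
The plan is to derive this corollary as an immediate consequence of Theorem \ref{thm:stable} combined with two well-known classical results in the linear theory, both of which are invoked in the paragraph preceding the statement. No new analytic input will be required; the argument is a short chain of equivalences.

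First I would apply Theorem \ref{thm:stable} with the choice $p=1$, which gives the metric equivalence: a normed space $X$ has metric stable type $1$ if and only if $X$ has stable type $1$ in the classical linear sense. Next I would invoke the Maurey--Pisier characterization from \cite{Pis74a,Pis74b} in the case $p=1$: a normed space $X$ has stable type $1$ if and only if $X$ has Rademacher type $1+\varepsilon$ for some $\varepsilon>0$, i.e. Rademacher type $q$ for some $q>1$. Finally I would use Pisier's K-convexity theorem \cite{Pis82}, which asserts that $X$ is K-convex if and only if $X$ has Rademacher type $q$ for some $q>1$.

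Concatenating the three equivalences
\[
\text{metric stable type } 1 \ \Longleftrightarrow \ \text{stable type } 1 \ \Longleftrightarrow \ \text{Rademacher type } q \text{ for some } q>1 \ \Longleftrightarrow \ \text{K-convex}
\]
yields the corollary. Since each of these equivalences is either proven as the main result of the corresponding section (Theorem \ref{thm:stable}) or quoted as a classical theorem, there is no genuine obstacle in the argument; the only point requiring a moment of care is to verify that Theorem \ref{thm:stable} is indeed applicable at the endpoint $p=1$, which is the borderline case in its allowed range $p\in[1,2)$. Once this is noted, the three-step chain closes without further calculation.
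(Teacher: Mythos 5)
Your proposal is correct and follows exactly the same chain of equivalences that the paper uses: Theorem \ref{thm:stable} at $p=1$, the Maurey--Pisier characterization of stable type, and Pisier's K-convexity theorem. Your observation that the endpoint $p=1$ falls within the allowed range $p\in[1,2)$ of Theorem \ref{thm:stable} is the only point requiring care, and you handle it correctly.
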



\subsubsection*{Embeddings}

The fact that stable type of Banach spaces is a refinement of Rademacher type can also be seen at the level of (linear) embeddings. Indeed, if $X$ is a space of Rademacher type $p$, then standard considerations show that the Banach--Mazur distance between $\ell_1^n$ and any $n$-dimensional subspace of $X$ is at least a constant multiple of $n^{1-1/p}$.  This estimate is sharp for $X=\ell_p$.  On the other hand, if $X$ has stable type $p$, then the characterization of \cite{Pis74a,Pis74b} implies that $X$ has Rademacher type $r$ for some $r>p$.  Hence, there exists $\e(X)>0$ such that the Banach--Mazur distance between $\ell_1^n$ and any $n$-dimensional subspace of $X$ is at least $n^{1-1/p+\e(X)}$.

In the metric setting,  Enflo type gives distortion lower bounds for embeddings of $\{-1,1\}^n$.  More generally,  given a vector ${\bf w}=(w_1,\ldots,w_n) \in\R_+^n$, consider the metric space $\{-1,1\}^n_{{\bf w}}$ which is the hypercube $\{-1,1\}^n$ equipped with the \emph{weighted} Hamming metric
\begin{equation}
\forall \ x,y\in\{-1,1\}^n,\qquad \rho_{{\bf w}} (x,y) = \frac{1}{2} \sum_{i=1}^n w_i |x(i)-y(i)|.
\end{equation}
It follows readily from the definitions that if $\MM$ has Enflo type $p$, then any embedding of $\{-1,1\}^n_{{\bf w}}$ into $\MM$ incurs distortion at least a constant multiple of $\|{\bf w}\|_{\ell_1^n}/\|{\bf w}\|_{\ell_{p}^n}$.  For spaces of metric stable type $p$, we have the following improvement for the distortion of weighted hypercubes.

\begin{proposition} \label{prop:dist}
If a metric space $(\MM,d_\MM)$ has metric stable type $p$ with constant $S$, then
\begin{equation} \label{eq:dist}
\forall \ {\bf w}\in\R_+^n,\qquad \msf{c}_\MM\big(\{-1,1\}^n_{{\bf w}} \big) \geq \frac{\|{\bf w}\|_{\ell_1^n}}{S\|{\bf w}\|_{\ell_{p,\infty}^n}}.
\end{equation}
\end{proposition}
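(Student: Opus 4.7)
The plan is to apply the metric stable type inequality \eqref{eq:st} directly to a near-optimal bi-Lipschitz embedding of the weighted hypercube into $\MM$, after rescaling so that the lower Lipschitz constant is $1$ and the upper one equals the distortion $D \eqdef \msf{c}_\MM(\{-1,1\}^n_\mathbf{w})$ (up to an arbitrarily small loss). That is, I will start from a map $f:\{-1,1\}^n_{\mathbf{w}}\to \MM$ with
\begin{equation*}
\forall\ x,y\in\{-1,1\}^n,\qquad \rho_{\mathbf{w}}(x,y)\leq d_\MM(f(x),f(y))\leq D\rho_{\mathbf{w}}(x,y),
\end{equation*}
and will estimate the two sides of \eqref{eq:st} in terms of the weights $\mathbf{w}$.

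The two elementary observations that drive the argument are the following. First, $\rho_{\mathbf{w}}(x,-x)=\sum_{i=1}^n w_i=\|\mathbf{w}\|_{\ell_1^n}$, so the lower Lipschitz bound gives $d_\MM(f(x),f(-x))\geq\|\mathbf{w}\|_{\ell_1^n}$ pointwise, hence the left-hand side of \eqref{eq:st} is at least $\|\mathbf{w}\|_{\ell_1^n}^p$. Second, $\rho_{\mathbf{w}}(x,x^{(i)})=w_i$ for every coordinate flip $x\mapsto x^{(i)}$, so the upper Lipschitz bound yields $\mf{d}_i f(x)\leq \tfrac{D}{2}w_i$ for every $x$ and $i$. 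To convert this into a uniform upper bound on the right-hand side of \eqref{eq:st}, I will use the (immediate) monotonicity of the weak $\ell_p$ quasinorm: if $|a_i|\leq|b_i|$ for all $i$, then $\{i:|a_i|\geq r\}\subseteq\{i:|b_i|\geq r\}$, which by definition gives $\|a\|_{\ell_{p,\infty}^n}\leq\|b\|_{\ell_{p,\infty}^n}$. Applying this to the vectors $(\mf{d}_i f(x))_{i=1}^n$ and $\tfrac{D}{2}\mathbf{w}$ produces
\begin{equation*}
\big\|(\mf{d}_1 f(x),\ldots,\mf{d}_n f(x))\big\|_{\ell_{p,\infty}^n}\leq \tfrac{D}{2}\|\mathbf{w}\|_{\ell_{p,\infty}^n}
\end{equation*}
uniformly in $x\in\{-1,1\}^n$.

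Inserting both estimates into \eqref{eq:st} gives $\|\mathbf{w}\|_{\ell_1^n}^p\leq S^p(D/2)^p\|\mathbf{w}\|_{\ell_{p,\infty}^n}^p$, which after taking $p$-th roots yields a lower bound on $D$ at least as strong as \eqref{eq:dist}. Letting the initial rescaling slack tend to zero completes the proof.

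The argument is essentially a substitution into the defining inequality, so there is no serious obstacle; the only point that requires any verification is the monotonicity of $\|\cdot\|_{\ell_{p,\infty}^n}$ under pointwise domination, and this follows directly from the definition. The gain over the naive Enflo-type bound (which would involve $\|\mathbf{w}\|_{\ell_p^n}$ instead of $\|\mathbf{w}\|_{\ell_{p,\infty}^n}$) is exactly the replacement of the $\ell_p^n$ norm by the weak $\ell_p^n$ quasinorm on the right-hand side of \eqref{eq:st}.
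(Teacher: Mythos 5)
Your proposal is correct and proceeds exactly as the paper does: apply the defining inequality \eqref{eq:st} to a near-optimal embedding, lower bound the left side via $\rho_{\bf w}(x,-x)=\|{\bf w}\|_{\ell_1^n}$, and upper bound the right side via $\mf{d}_if(x)\lesssim Dw_i$ together with monotonicity of the weak $\ell_p$ quasinorm under pointwise domination. The only (harmless) discrepancy is that you kept the factor $\tfrac12$ coming from the definition of $\mf{d}_i$, which in fact yields a lower bound twice as large as the stated one, whereas the paper silently drops it.
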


Concretely, choosing the weight vector ${\bf w}$ with $w_i = i^{-1/p}$, we have that $\|{\bf w}\|_{\ell_{p,\infty}^n} \asymp 1$, whereas $\|{\bf w}\|_{\ell_{p}^n} \asymp (\log n)^{1/p}$, therefore any embedding of $\{-1,1\}^n_{{\bf w}}$ into a metric space of Enflo type $p$ incurs distortion at least $n^{1-1/p}/(\log n)^{1/p}$ instead of the asymptotically stronger lower bound $n^{1-1/p}$ which one gets for target spaces of metric stable type $p$.


\subsubsection*{On the nonlinear Maurey--Pisier problem for type}

A landmark theorem of Maurey and Pisier \cite{MP76} asserts that if $p_X\in[1,2]$ is the supremal Rademacher type of an infinite-dimensional Banach space $X$, then $X$ containts the spaces $\ell_{p_X}^n$ uniformly.   In \cite{Pis83}, Pisier used stable type to give a much simpler proof of this important theorem, while simultaneously obtaining the following quantitative refinement: for any $\e>0$ and $p\in[1,2)$,  every normed space $X$ containts a subspace which is $(1+\e)$-isomorphic to $\ell_p^d$ provided that $d$ is smaller than some explicit (unbounded) function of $\e$ and the stable type $p$ constant of $X$.  When $X$ is infinite-dimensional, this yields the result of \cite{MP76} since $X$ does not have stable type $p_X$ by the results of \cite{Pis74a,Pis74b}.

In \cite{BMW86}, Bourgain, Milman and Wolfson introduced a notion of nonlinear type,  which is now referred to as BMW type, and showed that if a metric space $\msf{M}$ does not have BMW type $p$ for any $p>1$, then $\msf{c}_\MM(\{-1,1\}^n) = 1$ for any $n\in\N$.  Finding a satisfactory nonlinear Maurey--Pisier theorem for metric spaces of supremal nonlinear type $p>1$ remains an open problem (see also \cite[Section~6]{Nao14}). It would be interesting to understand whether the newly introduced notion of metric stable type can lead to a nonlinear version of the result of \cite{Pis83}.  We refer to Section \ref{sec:disc} for further remarks and open problems which naturally arise from this work.


\subsection*{Acknowledgements} I wish to thank Florent Baudier, Paata Ivanisvili and Assaf Naor for their constructive feedback on this work.


\section{Preliminaries} \label{sec:prel}

\subsection{Probability} In this section, we outline the basics of analysis on the biased hypercube, with an emphasis on the underlying semigroup structure.


\subsubsection*{The biased measure}

Recall that, for $\alpha\in(0,1)$,  the $\alpha$-biased probability measure $\mu_\alpha$ on $\{-1,1\}$ is given by $\mu_\alpha\{1\}=\alpha$ and $\mu_\alpha\{-1\}=1-\alpha$.  Moreover, if $\pmb{\alpha} = (\alpha_1,\ldots,\alpha_n)\in(0,1)^n$, then we shall denote by $\mu_{\pmb{\alpha}}$ the product measure $\mu_{\alpha_1}\otimes\cdots\otimes\mu_{\alpha_n}$ on the hypercube $\{-1,1\}^n$.


\subsubsection*{The Markov process}

For $\alpha\in(0,1)$, consider the transition matrices $\{p_t^\alpha\}_{t\geq0}$ on $\{-1,1\}$ given by
\begin{equation} \label{eq:transition}
\forall \ t\geq0,\qquad
\begin{pmatrix}
p_t^\alpha(1,1) & p_t^\alpha(1,-1) \\
p_t^\alpha(-1,1) & p_t^\alpha(-1,-1) 
\end{pmatrix}	
 = 
 \begin{pmatrix}
1-(1-e^{-t})(1-\alpha) & (1-e^{-t})(1-\alpha) \\
(1-e^{-t})\alpha & 1-(1-e^{-t})\alpha 
\end{pmatrix}	
\end{equation}
Moreover, for $\pmb{\alpha} = (\alpha_1,\ldots,\alpha_n)\in(0,1)^n$ consider the corresponding tensor products $\{p_t^{\pmb{\alpha}}\}_{t\geq0}$ on $\{-1,1\}^n$ given by
\begin{equation} \label{eq:transition2}
\forall \ x,y\in\{-1,1\}^n, \qquad p_t^{\pmb{\alpha}}(x,y) = \prod_{i=1}^n p_t^{\alpha_i}\big(x(i),y(i)\big).
\end{equation}
As each $p_t^{\alpha_i}$ is a row-stochastic $2\times2$ matrix with nonnegative entries, the same holds also for the $2^n\times2^n$ matrices $p_t^{\pmb{\alpha}}$. Therefore, $\{p_t^{\pmb{\alpha}}\}_{t\geq0}$ is the transition kernel of a time-homogeneous Markov chain $\{X_t^{\pmb{\alpha}}\}_{t\geq0}$ on $\{-1,1\}^n$, that is
\begin{equation}
\forall \ t, s\geq0,\qquad \mb{P}\big\{X_{t+s}^{\pmb{\alpha}} = y \ \big| \ X_s^{\pmb{\alpha}}=x \big\} = p_t^{\pmb{\alpha}}(x,y),
\end{equation}
where $x,y\in\{-1,1\}^n$. We shall need the following simple facts for this process.

\begin{lemma}
Fix $\pmb{\alpha}\in(0,1)^n$ and let $\{X_t^{\pmb{\alpha}}\}_{t\geq0}$ be a Markov process on $\{-1,1\}^n$ with transition kernels $\{p_t^{\pmb{\alpha}}\}_{t\geq0}$. Then, $\{X_t^{\pmb{\alpha}}\}_{t\geq0}$ is stationary and reversible with respect to $\mu_{\pmb{\alpha}}$.
 \end{lemma}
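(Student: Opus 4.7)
The plan is to reduce both claims to the one-dimensional case via the product structure of $\mu_{\pmb{\alpha}}$ and $p_t^{\pmb{\alpha}}$, and then to verify them by a direct computation with the $2\times 2$ transition matrix in \eqref{eq:transition}.

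First I would recall that reversibility with respect to $\mu_{\pmb{\alpha}}$ means the detailed balance equation
\begin{equation*}
\mu_{\pmb{\alpha}}(x)\, p_t^{\pmb{\alpha}}(x,y) = \mu_{\pmb{\alpha}}(y)\, p_t^{\pmb{\alpha}}(y,x), \qquad x,y\in\{-1,1\}^n,\ t\geq 0,
\end{equation*}
and that reversibility automatically implies stationarity by summing over $x$ and using that $p_t^{\pmb{\alpha}}$ is row-stochastic. Hence it suffices to establish detailed balance.

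Next, since $\mu_{\pmb{\alpha}} = \bigotimes_{i=1}^n \mu_{\alpha_i}$ and $p_t^{\pmb{\alpha}}(x,y) = \prod_{i=1}^n p_t^{\alpha_i}(x(i),y(i))$ by \eqref{eq:transition2}, detailed balance for $p_t^{\pmb{\alpha}}$ factorizes into the coordinatewise conditions
\begin{equation*}
\mu_{\alpha_i}(a)\, p_t^{\alpha_i}(a,b) = \mu_{\alpha_i}(b)\, p_t^{\alpha_i}(b,a), \qquad a,b\in\{-1,1\},
\end{equation*}
for each $i\in\{1,\ldots,n\}$. For $a=b$ this is trivial, so only the case $\{a,b\}=\{1,-1\}$ needs to be verified.

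Finally, using \eqref{eq:transition} and the definition $\mu_{\alpha_i}\{1\}=\alpha_i$, $\mu_{\alpha_i}\{-1\}=1-\alpha_i$, a direct computation gives
\begin{equation*}
\mu_{\alpha_i}(1)\, p_t^{\alpha_i}(1,-1) = \alpha_i(1-e^{-t})(1-\alpha_i) = \mu_{\alpha_i}(-1)\, p_t^{\alpha_i}(-1,1),
\end{equation*}
proving detailed balance in one coordinate and thus, by the factorization above, in every dimension. Stationarity then follows at once. No step here poses a real obstacle; the only thing to be careful about is that the one-dimensional detailed balance relations tensorize correctly, which is immediate from the multiplicative form of both $\mu_{\pmb{\alpha}}$ and $p_t^{\pmb{\alpha}}$.
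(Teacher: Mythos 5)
Your proof is correct and follows the same route as the paper: reduce to the one-dimensional case via the product structure of $\mu_{\pmb{\alpha}}$ and $p_t^{\pmb{\alpha}}$, verify detailed balance from \eqref{eq:transition}, and deduce stationarity from reversibility. The only difference is that you spell out the one-coordinate computation explicitly, where the paper simply notes it "follows automatically" from the transition matrix.
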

 
 \begin{proof}
Due to the product structure of the Markov chain, it suffices to consider the case $n=1$, that is, to prove that for $\alpha\in(0,1)$,
\begin{equation}
\forall \ x,y\in\{-1,1\},\qquad \mu_\alpha(x) p_t^\alpha(x,y) = \mu_\alpha(y) p_t^\alpha(y,x).
\end{equation} 
This follows automatically by the expression \eqref{eq:transition} for the transition matrix. The simple fact that reversibility implies stationarity is well-known \cite[Proposition~1.20]{LLP17}.
 \end{proof}
 
 The stationary Markov process $\{X_t^{\pmb{\alpha}}\}_{t\geq0}$ has a simple probabilistic interpretation which we shall now describe. For $i=1,\ldots,n$, let $\{N_t(i)\}_{t\geq0}$ be $n$ independent Poisson processes of unit rate and suppose that $X_0^{\pmb{\alpha}}$ is sampled from $\mu_{\pmb{\alpha}}$ independently of $\{N_t\}_{t\geq0}$. Then, at any time $t>0$ for which the process $N_t(i)$  jumps for some $i\in\{1,\ldots,n\}$, the corresponding value $X_t^{\pmb{\alpha}}(i)$ is updated independently from $\mu_{\alpha_i}$. An explicit calculation shows that this probabilistic construction gives rise exactly to the transition kernel of \eqref{eq:transition} and \eqref{eq:transition2}.


\subsubsection*{The corresponding semigroup}

Fix $\pmb{\alpha}\in(0,1)^n$ and let $\{P_t^{\pmb{\alpha}}\}_{t\geq0}$ be the Markov semigroup associated to the process $\{X_t^{\pmb{\alpha}}\}_{t\geq0}$. Concretely, if $X$ is a vector space, then for every function $f:\{-1,1\}^n\to X$ and $t\geq0$, we denote by
\begin{equation} \label{eq:sgp}
\forall \ x\in\{-1,1\}^n,\qquad P_t^{\pmb{\alpha}}f(x) = \mb{E}\big[ f\big(X_t^{\pmb{\alpha}}\big) \ \big| \ X_0^{\pmb{\alpha}} = x\big].
\end{equation}
In view of the above interpretation of $\{X_t^{\pmb{\alpha}}\}_{t\geq0}$ by means of a Poisson process, the action of the semigroup $\{P_t^{\pmb{\alpha}}\}_{t\geq0}$ can be computed via the identity
\begin{equation} \label{eq:sgp2}
\begin{split}
P_t^{\pmb{\alpha}}f & = \sum_{S\subseteq\{1,\ldots,n\}} \mb{P}\big\{N_t(i)>0 \ \mbox{for } i\in S \mbox{ and } N_t(i)=0 \mbox{ for } i\notin S\big\} \int_{\{-1,1\}^S} f \, \diff\prod_{i\in S} \mu_{\alpha_i}
\\ & = \sum_{S\subseteq\{1,\ldots,n\}} (1-e^{-t})^{|S|} e^{-t(n-|S|)}  \int_{\{-1,1\}^S} f \, \diff\prod_{i\in S} \mu_{\alpha_i}.
\end{split}
\end{equation}

\begin{lemma}
Fix $\pmb{\alpha}=(\alpha_1,\ldots,\alpha_n)\in(0,1)^n$ and let $\{P_t^{\pmb{\alpha}}\}_{t\geq0}$ be the semigroup \eqref{eq:sgp}. Then, the action of its generator $\ms{L}_{\pmb{\alpha}}$ on a function $f:\{-1,1\}^n\to X$, where $X$ is a vector space, is given by
\begin{equation} \label{eq:gen}
\forall \ x\in\{-1,1\}^n,\qquad \ms{L}_{\pmb{\alpha}} f(x) = -\sum_{i=1}^n \partial_i^{\alpha_i} f(x)
\end{equation}
where $\partial_i^\beta f(x) = f(x) - \int_{\{-1,1\}} f(x_1,\ldots,x_{i-1},y,x_{i+1},\ldots,x_n) \, \diff\mu_\beta(y)$ for $\beta\in(0,1)$.
\end{lemma}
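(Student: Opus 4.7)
The plan is to compute $\ms{L}_{\pmb{\alpha}} f = \tfrac{\diff}{\diff t}\big|_{t=0^+} P_t^{\pmb{\alpha}} f$ by differentiating the explicit representation \eqref{eq:sgp2} termwise at $t=0$.

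First, observe that at $t=0$ the only subset $S\subseteq\{1,\ldots,n\}$ which contributes to the sum in \eqref{eq:sgp2} is $S=\emptyset$, so that $P_0^{\pmb{\alpha}} f = f$, as expected. Next, write $\varphi_S(t) \eqdef (1-e^{-t})^{|S|} e^{-t(n-|S|)}$ and compute $\varphi_S'(0)$ according to the size of $S$: for $|S|=0$ one has $\varphi_\emptyset(t) = e^{-tn}$ so $\varphi_\emptyset'(0) = -n$; for $|S|=1$, a direct calculation gives $\varphi_S'(0) = 1$; for $|S|\geq 2$, the factor $(1-e^{-t})^{|S|}$ vanishes to order $\geq 2$ at the origin, so $\varphi_S'(0)=0$. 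Combining these contributions,
\begin{equation*}
\ms{L}_{\pmb{\alpha}} f(x) = -n f(x) + \sum_{i=1}^n \int_{\{-1,1\}} f(x_1,\ldots,x_{i-1},y,x_{i+1},\ldots,x_n) \,\diff \mu_{\alpha_i}(y),
\end{equation*}
which rearranges to $-\sum_{i=1}^n \partial_i^{\alpha_i} f(x)$ by the very definition of $\partial_i^{\alpha_i}$.

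There is no genuine obstacle here: the identity \eqref{eq:sgp2} already encodes $P_t^{\pmb{\alpha}}$ as a polynomial in $1-e^{-t}$ and $e^{-t}$, so termwise differentiation is immediate and no exchange-of-limits issue arises because the sum is finite. Alternatively, one could exploit the tensor-product structure $p_t^{\pmb{\alpha}} = \bigotimes_{i=1}^n p_t^{\alpha_i}$ from \eqref{eq:transition2}, which forces the generator to decompose as a sum of single-coordinate generators, and then differentiate the $2\times 2$ matrix in \eqref{eq:transition} at $t=0$ to identify each summand with $-\partial_i^{\alpha_i}$; both routes yield \eqref{eq:gen}.
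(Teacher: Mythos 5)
Your proof is correct and follows exactly the same route as the paper: the paper's proof simply invokes the definition of the generator as $\frac{\diff}{\diff t}\big|_{t=0} P_t^{\pmb{\alpha}}f$ applied to the representation \eqref{eq:sgp2}, and your argument fills in the straightforward differentiation that the paper leaves implicit. The termwise computation (contributions $-n$ from $S=\emptyset$, $1$ from each singleton, $0$ from $|S|\geq 2$) is accurate and rearranges to \eqref{eq:gen} as claimed.
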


\begin{proof}
The claim follows from the expression \eqref{eq:sgp2} of the semigroup and the definition
\begin{equation*}
\forall \ x\in\{-1,1\}^n,\qquad \ms{L}_{\pmb{\alpha}} f(x) = \frac{\diff}{\diff t}\Big|_{t=0} P_t^{\pmb{\alpha}}f(x). \qedhere
\end{equation*}
\end{proof}


\subsection{Topology}

Apart from the probabilistic elements from analysis on biased hypercubes, the proof of Theorem \ref{thm:main} also has a crucial topological component, following an idea of \cite{Ole96}.


\subsubsection*{The Borsuk--Ulam theorem}

While the Poincar\'e-type inequality of Enflo for $X$-valued functions on $\{-1,1\}^n$ cannot capture the dimension of the target space $X$, a key part of the argument towards Theorem \ref{thm:main} is to show that there exists a $\dim(X)$-dimensional subcube of $\{-1,1\}^n$ along with a bias vector $\pmb{\alpha}$ for which the $\pmb{\alpha}$-biased Poincar\'e inequalities (see Theorem \ref{thm:poincare} below) on this subcube and its antipodal yield much better distortion lower bounds. This will be proven using the Borsuk--Ulam theorem from algebraic topology, see \cite{Mat03}.

\begin{theorem} [Borsuk--Ulam]
For every continuous function $g:\mb{S}^d\to\R^d$, where $d\in\N$, there exists a point $w\in\mb{S}^d$ such that $g(w)=g(-w)$. 
\end{theorem}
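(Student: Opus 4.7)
The plan is to reduce the statement to the well-known non-existence of continuous odd (antipode-preserving) maps $\mb{S}^d \to \mb{S}^{d-1}$, and then establish that non-existence via degree theory.

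For the reduction, suppose for contradiction that $g(w) \neq g(-w)$ for every $w \in \mb{S}^d$. Then the formula
\[ h(w) \eqdef \frac{g(w)-g(-w)}{\|g(w)-g(-w)\|_2} \]
defines a continuous odd map $h : \mb{S}^d \to \mb{S}^{d-1}$, that is, $h(-w) = -h(w)$. Hence it suffices to show that no such map exists.

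To rule out an odd continuous map $\mb{S}^d \to \mb{S}^{d-1}$, I would invoke Borsuk's companion theorem asserting that every continuous odd self-map $f : \mb{S}^d \to \mb{S}^d$ has odd degree, and in particular is surjective. Granting this, the conclusion is immediate: if an odd $h : \mb{S}^d \to \mb{S}^{d-1}$ existed, composing with an equatorial inclusion $\mb{S}^{d-1} \hookrightarrow \mb{S}^d$ would produce an odd continuous map $\mb{S}^d \to \mb{S}^d$ whose image misses the two poles. Such a map must have degree zero, contradicting Borsuk's theorem.

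The main obstacle is the odd-degree property itself; everything else is formal. I would approach it by approximating $f$ in the $C^0$ topology by an odd smooth map $\tilde f$ admitting a regular value $p$ lying outside the image under $\tilde f$ of the coordinate subspheres of lower dimension. The preimage $\tilde f^{-1}(p)$ then splits into antipodal pairs on the open hemispheres, and an inductive argument on $d$, starting from the connected/disconnected dichotomy at $d=1$, forces the total signed parity to be odd. Carefully tracking signs under the antipodal action is the delicate point, in contrast to the purely formal reduction in the first step. An alternative route avoiding smoothness passes to the antipodal quotients $\bar h : \mb{R}P^d \to \mb{R}P^{d-1}$ and exploits the ring structure $H^\ast(\mb{R}P^n;\mb{F}_2) = \mb{F}_2[\alpha_n]/(\alpha_n^{n+1})$: since $\bar h$ pulls back the tautological line bundle, $\bar h^\ast(\alpha_{d-1}) = \alpha_d$, so $\bar h^\ast(\alpha_{d-1}^d) = \alpha_d^d \neq 0$ in $H^d(\mb{R}P^d;\mb{F}_2)$, contradicting $\alpha_{d-1}^d = 0$ in $H^d(\mb{R}P^{d-1};\mb{F}_2)$.
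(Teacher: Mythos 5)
The paper does not give a proof of the Borsuk--Ulam theorem; it is stated as a classical result with a pointer to Matou\v{s}ek's book \cite{Mat03}, and the remainder of the argument (Lemma \ref{lem:ole2} onward) uses it as a black box. So there is no ``paper proof'' to compare against, only your sketch, which is a correct outline of one of the standard routes. The reduction is exactly right: if $g(w)\neq g(-w)$ for all $w$, then $h(w)=(g(w)-g(-w))/\|g(w)-g(-w)\|_2$ is a continuous odd map $\mb{S}^d\to\mb{S}^{d-1}$, so it suffices to rule such a map out. Your equatorial-inclusion trick to reduce that to Borsuk's odd-degree theorem is also correct: a map $\mb{S}^d\to\mb{S}^d$ that misses a point is null-homotopic, hence of degree zero, contradicting odd degree. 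Both of your proposed proofs of the odd-degree property are viable. The cohomological one is the cleanest to make rigorous; the one point to nail down is why $\bar h^\ast(\alpha_{d-1})=\alpha_d$, and your bundle-theoretic justification works (an odd map $h$ induces a bundle map of tautological line bundles covering $\bar h$, so $\bar h^\ast\gamma_{d-1}\cong\gamma_d$ and hence $\bar h^\ast w_1(\gamma_{d-1})=w_1(\gamma_d)$); an equivalent and perhaps more common phrasing checks that $\bar h_\ast$ is an isomorphism on $\pi_1$ because $h$, being odd, carries a path joining antipodes to a path joining antipodes. The smooth-approximation/mod-$2$ degree induction you sketch is also standard, and you correctly identify the sign bookkeeping as the delicate step; either route would complete a self-contained proof, but for this paper the citation is all that is needed.
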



\subsubsection*{Multilinear extension and low-dimensional faces of the cube}

Every function $f:\{-1,1\}^n\to X$ admits a unique multilinear extension on the solid cube $[-1,1]^n$, given by
\begin{equation} \label{eq:extension}
\forall \ y\in[-1,1]^n, \qquad F(y) \eqdef \sum_{S\subseteq \{-1,1\}^n} \Big( \frac{1}{2^n} \sum_{x\in\{-1,1\}^n} f(x) w_S(x) \Big) w_S(y),
\end{equation}
where $w_S(a) = \prod_{i\in S} a_i$, which is usually referred to as the Fourier--Walsh expansion of $f$.  Extending $f$ to the continuous cube allows for the use of topological methods.  In what follows, we will exploit the fact that the cube $[-1,1]^n$ is equipped with a canonical CW complex structure. Concretely,  for $d\in\{1,\ldots,n\}$,  consider the subsets
\begin{equation}
\ms{C}_d^n = \big\{ x \in[-1,1]^n: \ \mbox{there exists } \sigma\subseteq\{1,\ldots,n\} \mbox{ with } |\sigma|\geq n-d \mbox{ and } |x(i)|=1, \ \forall \ i\in\sigma\big\}
\end{equation}
consisting of all $\ell$-dimensional faces of $[-1,1]^n$ for $\ell\leq d$, so that $\ms{C}_n^n=[-1,1]^n$ and $\ms{C}_0^n=\{-1,1\}^n$.  We shall use the following elementary topological fact (see \cite[Lemma~1]{Ole96}).

\begin{lemma} \label{lem:ole}
If $d<n$, there exists a continuous map $h_d:\mb{S}^d\to \ms{C}_d^n$ with $h_d(-x)=-h_d(x)$,   $\forall \ x\in\mb{S}^d$.
\end{lemma}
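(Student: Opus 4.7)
The plan is to argue by induction on $d$, keeping $n$ fixed and assuming the dimensional gap $d<n$ throughout. For the base case $d=0$, the sphere $\mathbb{S}^{0}=\{\pm 1\}$ consists of two points and $\mathscr{C}_{0}^{n}=\{-1,1\}^{n}$; setting $h_{0}(\pm 1)=\pm(1,1,\dots,1)$ trivially yields a continuous antipodal map.

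For the inductive step, suppose that an antipodal continuous map $h_{d-1}\colon \mathbb{S}^{d-1}\to \mathscr{C}_{d-1}^{n}$ has been constructed (which is admissible since $d<n$ gives $d-1<n$). Identify $\mathbb{S}^{d-1}$ with the equator of $\mathbb{S}^{d}$ and decompose $\mathbb{S}^{d}=D_{+}\cup D_{-}$ into the closed upper and lower hemispheres, meeting along that equator. The key observation is that $\mathscr{C}_{d}^{n}$ is precisely the $d$-skeleton of $[-1,1]^{n}$ when the latter is endowed with its standard cubical CW structure; since $[-1,1]^{n}$ is contractible, the cellular approximation theorem yields
\[
\pi_{d-1}\big(\mathscr{C}_{d}^{n}\big)=\pi_{d-1}\big([-1,1]^{n}\big)=0.
\]
Consequently the composition $\mathbb{S}^{d-1}\xrightarrow{h_{d-1}} \mathscr{C}_{d-1}^{n}\hookrightarrow \mathscr{C}_{d}^{n}$ is null-homotopic and therefore admits a continuous extension $\tilde h\colon D_{+}\to \mathscr{C}_{d}^{n}$.

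Now define $h_{d}\colon \mathbb{S}^{d}\to \mathscr{C}_{d}^{n}$ by $h_{d}(x)=\tilde h(x)$ for $x\in D_{+}$ and $h_{d}(x)=-\tilde h(-x)$ for $x\in D_{-}$; this is well defined because $\mathscr{C}_{d}^{n}$ is invariant under negation. The two prescriptions agree on the shared equator $\mathbb{S}^{d-1}$ by the antipodal property of $h_{d-1}$: for $x\in\mathbb{S}^{d-1}$ one has $-\tilde h(-x)=-h_{d-1}(-x)=h_{d-1}(x)=\tilde h(x)$. Continuity then follows from the pasting lemma, while the relation $h_{d}(-x)=-h_{d}(x)$ is built into the definition. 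This completes the induction.

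The only non-formal ingredient, and the step I expect to be the main obstacle, is the vanishing of $\pi_{d-1}(\mathscr{C}_{d}^{n})$. This is exactly where the hypothesis $d<n$ is used: it guarantees that genuine $d$-cells of $[-1,1]^{n}$ are available to fill in the homotopies produced by cellular approximation, so that any map of $\mathbb{S}^{d-1}$ into $\mathscr{C}_{d-1}^{n}$ becomes null-homotopic after inclusion into $\mathscr{C}_{d}^{n}$. An alternative would be to carry out a fully explicit, cell-by-cell construction on a CW decomposition of $\mathbb{S}^{d}$ compatible with the antipodal action, but the obstruction-theoretic route above seems cleanest for the statement at hand.
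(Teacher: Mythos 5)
Your proof is correct, and it supplies an argument that the paper itself omits: the lemma is cited from Oleszkiewicz (\cite[Lemma~1]{Ole96}) without a reproduced proof, so there is nothing internal to compare against. The pivotal ingredient, $\pi_{d-1}(\mathscr{C}_d^n)=0$, is right. Since $\mathscr{C}_d^n$ is the $d$-skeleton of the contractible CW complex $[-1,1]^n$, the inclusion $\mathscr{C}_d^n\hookrightarrow[-1,1]^n$ is $d$-connected (a standard consequence of cellular approximation / the compression lemma), hence $\pi_{d-1}(\mathscr{C}_d^n)\cong\pi_{d-1}([-1,1]^n)=0$. Because $\mathscr{C}_d^n$ is path-connected for $d\geq1$ (the $1$-skeleton is the connected hypercube graph), every map $\mathbb{S}^{d-1}\to\mathscr{C}_d^n$ is freely null-homotopic, so the extension over a hemisphere exists, and your gluing via $h_d(x)=-\tilde h(-x)$ on the lower hemisphere is well defined, continuous, and antipodal.

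One small slip is the closing explanation of where $d<n$ enters. The vanishing $\pi_{d-1}(\mathscr{C}_d^n)=0$ holds for every $d$ with no restriction from $n$ (for $d\geq n$ it is trivial, since then $\mathscr{C}_d^n=[-1,1]^n$), and in fact your inductive step nowhere uses $d<n$. The hypothesis matters instead for the geometric content and the downstream application: when $d<n$ the set $\mathscr{C}_d^n$ is a proper subset of $\partial[-1,1]^n$ missing the origin, which is exactly what lets the Borsuk--Ulam argument of Lemma~\ref{lem:ole2} produce an antipodal pair lying on a low-dimensional face of the cube rather than at its centre.
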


Combining this and the Borsuk--Ulam theorem, we deduce the following useful lemma.

\begin{lemma} \label{lem:ole2}
If $n,d\in\N$ with $d<n$,  then for every continuous function $F:\ms{C}_d^n\to \R^d$ there exists a point $z\in\ms{C}_d^n$ such that F(z)=F(-z).
\end{lemma}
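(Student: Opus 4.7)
The plan is to reduce Lemma \ref{lem:ole2} directly to the Borsuk--Ulam theorem by composing with the antipodal map $h_d$ supplied by Lemma \ref{lem:ole}. Concretely, given a continuous $F:\ms{C}_d^n\to\R^d$, I would first form the composition $g\eqdef F\circ h_d:\mb{S}^d\to\R^d$. Since $h_d$ is continuous by Lemma \ref{lem:ole} and $F$ is continuous by hypothesis, $g$ is a continuous function from $\mb{S}^d$ to $\R^d$, so the Borsuk--Ulam theorem applies and yields a point $w\in\mb{S}^d$ with $g(w)=g(-w)$.

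Next I would use the antipodal compatibility of $h_d$, namely $h_d(-w)=-h_d(w)$, to translate this equality back to $\ms{C}_d^n$: we have
\begin{equation*}
F\bigl(h_d(w)\bigr) \;=\; g(w) \;=\; g(-w) \;=\; F\bigl(h_d(-w)\bigr) \;=\; F\bigl(-h_d(w)\bigr).
\end{equation*}
Setting $z\eqdef h_d(w)$, which lies in $\ms{C}_d^n$ because $h_d$ maps into this set, produces a point with $F(z)=F(-z)$, as required.

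There is essentially no obstacle here: both ingredients are quoted in the excerpt, and the only verification needed is that $\ms{C}_d^n$ is symmetric about the origin so that $-h_d(w)$ belongs to the domain of $F$, which is immediate from the defining condition $|x(i)|=1$ for $i\in\sigma$ being invariant under $x\mapsto -x$. Thus the whole argument amounts to a one-line composition and an application of Borsuk--Ulam.
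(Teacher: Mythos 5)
Your argument is exactly the paper's proof: compose $F$ with the odd map $h_d$ from Lemma \ref{lem:ole}, apply Borsuk--Ulam to $g=F\circ h_d$, and push the resulting coincidence point back to $\ms{C}_d^n$ via $z=h_d(w)$. The extra remark that $\ms{C}_d^n$ is symmetric about the origin is a harmless but correct sanity check; otherwise there is no difference.
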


\begin{proof}
Consider the function $g\eqdef F\circ h_d : \mb{S}^d\to\R^d$, where $h_d$ is the function of Lemma \ref{lem:ole}.  By the Borsuk--Ulam theorem and the oddness of $h_d$, there exists a point $w\in\mb{S}^d$ such that
\begin{equation}
F(h_d(w)) = g(w) = g(-w) = F(h_d(-w)) = F(-h_d(w))
\end{equation}
and the conclusion follows by choosing $z=h_d(w)\in\ms{C}_d^n$.
\end{proof}


\section{Proof of Theorem \ref{thm:main}}

We are now ready to proceed to the main part of the proof.  The main analytic component is a biased version of the key formula of \cite{IVV20} for the time derivative of the heat flow on $\{-1,1\}^n$.  Given $t\geq0$, $\alpha\in(0,1)$ and an auxiliary parameter $\theta\in\R$, consider the matrix $\eta_t^\alpha(\cdot,\cdot;\theta)$ given by
\begin{equation} \label{eq:eta}
\forall \ t\geq0,\qquad
\begin{pmatrix}
\eta_t^\alpha(1,1;\theta) & \eta_t^\alpha(1,-1;\theta) \\
\eta_t^\alpha(-1,1;\theta) & \eta_t^\alpha(-1,-1;\theta) 
\end{pmatrix}	
 = 
 \begin{pmatrix}
\frac{e^{-t}-\theta}{p_t^\alpha(1,1)} & \frac{-\theta}{p_t^\alpha(-1,1)} \\
\frac{\theta-e^{-t}}{p_t^\alpha(1,-1)} & \frac{\theta}{p_t^\alpha(-1,-1)} 
\end{pmatrix}.
\end{equation}
For future reference, we record the following straightforward properties of $\eta_t^\alpha(\cdot,\cdot;\theta)$.

\begin{lemma}
Fix $t\geq0$ and $\alpha\in(0,1)$. Then,
\begin{equation} \label{eq:center}
\forall \ x\in\{-1,1\},  \  \theta\in \R,  \qquad p_t^\alpha(x,1) \eta_t^\alpha(1,x;\theta) + p_t^\alpha(x,-1) \eta_t^\alpha(-1,x;\theta) = 0
\end{equation}
and
\begin{equation} \label{eq:second-mom}
\begin{split}
\min_{\theta\in\R} \max_{x\in\{-1,1\}} \Big\{ p_t^\alpha(x,1)  \eta_t^\alpha(1,x;\theta)^2 & + p_t^\alpha(x,-1)  \eta_t^\alpha(-1,x;\theta)^2 \Big\} \\ & = \frac{e^{-t}}{(e^t-1)(\sqrt{\alpha p_t^\alpha(-1,-1)}+\sqrt{(1-\alpha)p_t^\alpha(1,1)})^2} \leq \frac{1}{e^t-1}.
\end{split}
\end{equation}
\end{lemma}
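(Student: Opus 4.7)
Both parts of the lemma reduce to direct algebraic manipulations with the entries of $\eta_t^\alpha(\cdot,\cdot;\theta)$ given in \eqref{eq:eta}, exploiting the fact that the off-diagonal entries $\eta_t^\alpha(1,-1;\theta)$ and $\eta_t^\alpha(-1,1;\theta)$ are normalized by the ``swapped'' transitions $p_t^\alpha(-1,1)$ and $p_t^\alpha(1,-1)$ respectively --- it is this bookkeeping choice that forces the first-moment cancellation \eqref{eq:center}. To dispatch that identity I would simply split on $x\in\{-1,1\}$: for $x=1$ the two summands become $(e^{-t}-\theta)+(\theta-e^{-t})=0$ after the factors $p_t^\alpha(1,\pm1)$ clear the denominators in the $(1,1)$ and $(2,1)$ entries of $\eta$, and for $x=-1$ they similarly collapse to $-\theta+\theta=0$. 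The identity is thus true for every $\theta\in\R$.

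For \eqref{eq:second-mom} I would first compute the bracketed quantity for each fixed $x$. Using $p_t^\alpha(x,1)+p_t^\alpha(x,-1)=1$, the $x=1$ term simplifies to $A(e^{-t}-\theta)^2$ and the $x=-1$ term to $B\theta^2$, where $A\eqdef 1/(p_t^\alpha(1,1)p_t^\alpha(1,-1))$ and $B\eqdef 1/(p_t^\alpha(-1,1)p_t^\alpha(-1,-1))$. The inner $\max_x$ is therefore the pointwise maximum of two nonnegative convex parabolas, centered at $e^{-t}$ and at $0$ respectively; $\min_\theta$ is attained at the unique crossing point $\theta^\star\in(0,e^{-t})$ solving $\sqrt{A}(e^{-t}-\theta)=\sqrt{B}\theta$, namely $\theta^\star=\sqrt{A}e^{-t}/(\sqrt{A}+\sqrt{B})$, and at that point both expressions equal $e^{-2t}AB/(\sqrt{A}+\sqrt{B})^2$.

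It remains to massage this into the stated closed form. Substituting $p_t^\alpha(1,-1)=(1-e^{-t})(1-\alpha)$ and $p_t^\alpha(-1,1)=(1-e^{-t})\alpha$ from \eqref{eq:transition} lets one factor $\sqrt{1-e^{-t}}$ out of both terms of $\sqrt{A}+\sqrt{B}$, and a short regrouping turns $e^{-2t}AB/(\sqrt{A}+\sqrt{B})^2$ into
\[
\frac{e^{-t}}{(e^t-1)\big(\sqrt{\alpha p_t^\alpha(-1,-1)}+\sqrt{(1-\alpha)p_t^\alpha(1,1)}\big)^2},
\]
matching the claim. The upper bound by $1/(e^t-1)$ then follows from the elementary chain
\[
\big(\sqrt{\alpha p_t^\alpha(-1,-1)}+\sqrt{(1-\alpha)p_t^\alpha(1,1)}\big)^2 \geq e^{-t}\big(\sqrt{\alpha}+\sqrt{1-\alpha}\big)^2 \geq e^{-t},
\]
where the first inequality uses $p_t^\alpha(1,1),\,p_t^\alpha(-1,-1)\geq e^{-t}$ (immediate from \eqref{eq:transition}) and the second is $(\sqrt{\alpha}+\sqrt{1-\alpha})^2\geq \alpha+(1-\alpha)=1$. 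The entire argument is essentially bookkeeping; the only mildly delicate point is the two-parabola minimax calculation, but the convex geometry makes the optimal $\theta^\star$ transparent and no genuine obstacle arises.
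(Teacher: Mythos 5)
Your proof is correct and essentially parallels the paper's: the cancellation in \eqref{eq:center} is a direct check, and the minimax in \eqref{eq:second-mom} reduces to intersecting two parabolas $A(e^{-t}-\theta)^2$ and $B\theta^2$ at $\theta^\star=\sqrt{A}e^{-t}/(\sqrt{A}+\sqrt{B})$. The only divergence is cosmetic, in the closing inequality: you lower-bound the denominator via $p_t^\alpha(1,1),\,p_t^\alpha(-1,-1)\geq e^{-t}$ and then $(\sqrt\alpha+\sqrt{1-\alpha})^2\geq1$, whereas the paper first drops the cross term via $(\sqrt a+\sqrt b)^2\geq a+b$ and then uses $\alpha^2+(1-\alpha)^2\leq1$; both are elementary and land in the same place.
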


\begin{proof}
The centering condition \eqref{eq:center} can be checked easily using the explicit formulas \eqref{eq:transition} and \eqref{eq:eta} of the matrices. For \eqref{eq:second-mom}, we compute that for any $\theta\in\R$,
\begin{equation}
\begin{split}
 \max_{x\in\{-1,1\}} \Big\{ p_t^\alpha(x,1)  \eta_t^\alpha(1,x;\theta)^2 & + p_t^\alpha(x,-1)  \eta_t^\alpha(-1,x;\theta)^2 \Big\} 
 \\ & = \max\Big\{ \frac{(e^{-t}-\theta)^2}{p_t^\alpha(1,1) p_t^\alpha(1,-1)},  \frac{\theta^2}{p_t^\alpha(-1,1) p_t^\alpha(-1,-1)},\Big\}.
\end{split}
\end{equation}
As this is the maximum of two quadratic functions in $\theta$, its minimum is attained at the point $\theta^\ast$ where they intersect in the interval $(0,e^{-t})$, namely at
\begin{equation}
\theta^\ast = \frac{e^{-t}\sqrt{\alpha p_t^\alpha(-1,-1)}}{\sqrt{\alpha p_t^\alpha(-1,-1)}+\sqrt{(1-\alpha) p_t^\alpha(1,1)}}.
\end{equation}
The first equality in \eqref{eq:second-mom} is immediate, whereas for the inequality we compute
\begin{equation}
\begin{split}
 \frac{e^{-t}}{(e^t-1)(\sqrt{\alpha p_t^\alpha(-1,-1)}+\sqrt{(1-\alpha)p_t^\alpha(1,1)})^2} & \leq \frac{e^{-t}}{(e^t-1)(\alpha p_t^\alpha(-1,-1)+(1-\alpha)p_t^\alpha(1,1))}
\\ & = \frac{e^{-t}}{(e^t-1)\big(1-(1-e^{-t})(\alpha^2+(1-\alpha)^2)\big)} \leq \frac{1}{e^t-1},
\end{split}
\end{equation}
where both inequalities follow from the convexity of $x\mapsto x^2$.
\end{proof}

The key technical ingredient in the proof of Theorem \ref{thm:main} is the following identity.

\begin{proposition} \label{prop:ident}
Fix $n\in\N$, $\pmb{\alpha}=(\alpha_1,\ldots,\alpha_n)\in(0,1)^n$, $t\geq0$ and $\theta_1,\ldots,\theta_n\in\R$. Then, for every function $f:\{-1,1\}^n\to X$, where $X$ is a vector space, we have
\begin{equation} \label{eq:identi}
\forall \ x\in\{-1,1\}^n,\qquad \ms{L}_{\pmb{\alpha}} P_t^{\pmb{\alpha}} f(x) = - \mb{E}\Big[ \sum_{i=1}^n \eta_t^{\alpha_i}\big(x(i),X_t^{\pmb{\alpha}}(i);\theta_i\big) \partial_i^{\alpha_i} f(X_t^{\pmb{\alpha}}) \ \Big| \ X_0^{\pmb{\alpha}}=x\Big].
\end{equation}
\end{proposition}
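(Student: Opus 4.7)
The plan is to exploit the tensor-product structure of the semigroup $\{P_t^{\pmb{\alpha}}\}$ and reduce the identity to a single-coordinate calculation, after first observing that the right-hand side is independent of the free parameters $\theta_1,\ldots,\theta_n$.

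\emph{Commutation step.} Since $p_t^{\pmb{\alpha}}$ factorizes coordinatewise by \eqref{eq:transition2}, the semigroup $\{P_t^{\pmb{\alpha}}\}$ acts coordinate by coordinate, and since $\partial_i^{\alpha_i}$ only touches the $i$-th coordinate, $\partial_i^{\alpha_i}$ commutes with $P_t^{\pmb{\alpha}}$. Combined with \eqref{eq:gen} this gives
$$
\ms{L}_{\pmb{\alpha}} P_t^{\pmb{\alpha}} f(x) = -\sum_{i=1}^n P_t^{\pmb{\alpha}} \partial_i^{\alpha_i} f(x) = -\sum_{i=1}^n \mb{E}\bigl[\partial_i^{\alpha_i} f(X_t^{\pmb{\alpha}}) \,\bigl|\, X_0^{\pmb{\alpha}}=x\bigr].
$$

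\emph{Independence from $\theta_i$.} Conditional on $X_0^{\pmb{\alpha}}=x$, the coordinates of $X_t^{\pmb{\alpha}}$ are independent with $X_t^{\pmb{\alpha}}(j)\sim p_t^{\alpha_j}(x(j),\cdot)$. Taking the conditional expectation over $X_t^{\pmb{\alpha}}(i)$ first, the $\theta_i$-dependent part of the $i$-th summand on the right-hand side is proportional to
$$
\sum_{y(i)\in\{-1,1\}} p_t^{\alpha_i}(x(i),y(i))\,\bigl[\eta_t^{\alpha_i}(x(i),y(i);\theta_i)-\eta_t^{\alpha_i}(x(i),y(i);0)\bigr]\,\partial_i^{\alpha_i} f(y).
$$
The explicit formulas \eqref{eq:transition} and \eqref{eq:eta}, together with the reversibility $\mu_\alpha(a)p_t^\alpha(a,b)=\mu_\alpha(b)p_t^\alpha(b,a)$, yield the key algebraic identity
$$
p_t^{\alpha_i}(x(i),y(i))\,\bigl[\eta_t^{\alpha_i}(x(i),y(i);\theta_i)-\eta_t^{\alpha_i}(x(i),y(i);0)\bigr] = c_i\,\mu_{\alpha_i}(y(i)),
$$
for a scalar $c_i$ depending on $x(i),\alpha_i,\theta_i,t$ but not on $y(i)$. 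Since $\partial_i^{\alpha_i}f$ has vanishing $\mu_{\alpha_i}$-mean in the $i$-th coordinate, the inner sum is zero. Hence the right-hand side of \eqref{eq:identi} does not depend on $\theta_1,\ldots,\theta_n$.

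\emph{One-variable verification.} It now suffices to check the identity at $\theta_i=0$. By independence of coordinates, fixing the values of $X_t^{\pmb{\alpha}}(j)$ for $j\neq i$ reduces matters to the one-dimensional statement
$$
\sum_{y\in\{-1,1\}} p_t^\alpha(x,y)\,g(y) \;=\; \sum_{y\in\{-1,1\}} p_t^\alpha(x,y)\,\eta_t^\alpha(x,y;0)\,g(y), \qquad x\in\{-1,1\},
$$
for any $g:\{-1,1\}\to X$ with $\alpha g(1)+(1-\alpha)g(-1)=0$. Using the mixing representation $p_t^\alpha(x,y)=e^{-t}\mathbb{1}\{y=x\}+(1-e^{-t})\mu_\alpha(y)$ the centering of $g$ gives LHS $=e^{-t}g(x)$; a direct substitution of the values of $\eta_t^\alpha(\cdot,\cdot;0)$ from \eqref{eq:eta}, together with reversibility and the centering relation used to express $g(-x)$ in terms of $g(x)$, shows that the RHS also equals $e^{-t}g(x)$.

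The only real obstacle is the notational bookkeeping across the three $\{-1,1\}$-indexed matrices ($p_t^\alpha$, $\eta_t^\alpha$, and the partial derivative weights); the algebraic content is that reversibility turns the asymmetric weights $p_t^{\alpha_i}(x(i),\cdot)\eta_t^{\alpha_i}(x(i),\cdot;\theta_i)$ into a $\mu_{\alpha_i}$-shifted version of their $\theta_i=0$ value, which is precisely the degree of freedom that is killed by the centering of $\partial_i^{\alpha_i}f$.
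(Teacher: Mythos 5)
Your proof is correct and takes essentially the same route as the paper: both exploit the tensor-product structure of $P_t^{\pmb{\alpha}}$ to reduce the identity to a single-coordinate statement (equivalently a $2\times 2$ linear system) and then verify that system by direct computation from the explicit formulas \eqref{eq:transition} and \eqref{eq:eta} together with the centering relation $\alpha g(1)+(1-\alpha)g(-1)=0$. The one wrinkle you add --- first establishing $\theta$-independence of the right-hand side via the identity $p_t^{\alpha}(x,y)\bigl[\eta_t^{\alpha}(x,y;\theta)-\eta_t^{\alpha}(x,y;0)\bigr]=c(x,\alpha,\theta,t)\,\mu_\alpha(y)$ and the vanishing $\mu_{\alpha_i}$-mean of $\partial_i^{\alpha_i}f$, so that only the $\theta=0$ case needs to be checked --- is a sound and pleasant simplification, but it is a refinement of the same argument rather than a genuinely different one.
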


\begin{proof}
In view of \eqref{eq:gen} and the product structure of the process $\{X_t^{\pmb{\alpha}}\}_{t\geq0}$, it suffices to check the claim for $n=1$, namely that for every $\beta\in(0,1)$, $\theta\in\R$ and $f:\{-1,1\}\to X$,
\begin{equation}
\forall \ x\in\{-1,1\},\qquad e^{-t} \partial^\beta f(x) = P_t^\beta \partial^\beta f(x) = \mb{E}\big[ \eta_t^\beta(x,X_t^\beta;\theta) \partial^\beta f(X_t^\beta) \ \big| \ X_0^\beta=x\big],
\end{equation}
where the first equality follows from the probabilistic representation \eqref{eq:sgp2}. Taking into account that $\beta \partial^\beta f(1) + (1-\beta) \partial^\beta f(-1) = 0$,  this amounts to the system of equations
\begin{equation}
\begin{cases}
e^{-t} = p_t^\beta(1,1) \eta_t^\beta(1,1;\theta) - \frac{\beta}{1-\beta} p_t^\beta(1,-1) \eta_t^\beta(1,-1;\theta) \\
e^{-t} = -\frac{1-\beta}{\beta} p_t^\beta(-1,1) \eta_t^\beta(-1,1;\theta) + p_t^\beta(-1,-1) \eta_t^\beta(-1,-1;\theta)
\end{cases}
\end{equation}
which can be easily verified by direct computation.
\end{proof}

\begin{theorem} \label{thm:poincare}
Fix $p\in[1,2]$ and let $(X,\|\cdot\|_X)$ be a normed space of Rademacher type $p$.  Then, for any $n\in\N$ and $\pmb{\alpha}=(\alpha_1,\ldots,
\alpha_n)\in(0,1)^n$, every function $f:\{-1,1\}^n\to X$ satisfies
\begin{equation}
\int_{\{-1,1\}^n}\Big\| f(x) - \int_{\{-1,1\}^n} f \,\diff\mu_{\pmb{\alpha}} \Big\|_X^p \,\diff\mu_{\pmb{\alpha}}(x) \leq \big(2\pi \T_p(X)\big)^p \sum_{i=1}^n \int_{\{-1,1\}^n} \big\| \partial_i^{\alpha_i} f(x)\big\|_X^p \,\diff\mu_{\pmb{\alpha}}(x).
\end{equation}
\end{theorem}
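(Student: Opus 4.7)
The plan is to integrate the heat flow identity of Proposition~\ref{prop:ident} along the time axis, using reversibility of the Markov process $\{X_t^{\pmb{\alpha}}\}_{t\ge 0}$ to recognize a conditionally independent centered sum inside, and then invoking Rademacher type of $X$.  First, since $\{P_t^{\pmb{\alpha}}\}_{t\ge 0}$ has stationary measure $\mu_{\pmb{\alpha}}$ and is ergodic, one has the usual semigroup identity $f - \int f\,\diff\mu_{\pmb{\alpha}} = -\int_0^\infty \ms{L}_{\pmb{\alpha}} P_t^{\pmb{\alpha}} f \,\diff t$; by Minkowski's inequality in $L^p(\mu_{\pmb{\alpha}};X)$ it suffices to show
\[
\bigl\|\ms{L}_{\pmb{\alpha}} P_t^{\pmb{\alpha}} f\bigr\|_{L^p(\mu_{\pmb{\alpha}};X)} \leq \frac{2\,\T_p(X)}{\sqrt{e^t-1}} \Bigl(\sum_{i=1}^n \|\partial_i^{\alpha_i} f\|_{L^p(\mu_{\pmb{\alpha}};X)}^p\Bigr)^{1/p}
\]
and then integrate, using the exact identity $\int_0^\infty (e^t-1)^{-1/2}\,\diff t=\pi$ to produce the constant $2\pi\T_p(X)$.

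Next I would apply Proposition~\ref{prop:ident} with the minimax value $\theta_i=\theta_i^\ast$ given by \eqref{eq:second-mom}, noting that this choice is independent of $x$. After taking $L^p$-norms on both sides and pulling the norm inside the conditional expectation by Jensen's inequality, the resulting quantity can be recast via stationarity as
\[
\mb{E}\Bigl\|\sum_{i=1}^n \eta_t^{\alpha_i}\bigl(X_0(i),X_t(i);\theta_i^\ast\bigr)\, \partial_i^{\alpha_i} f\bigl(X_t^{\pmb{\alpha}}\bigr)\Bigr\|_X^p,
\]
where $(X_0,X_t)$ is the pair from the stationary process.  The crucial step is now to condition on $X_t=y$: by reversibility the coordinates $X_0(i)$ are conditionally independent with law $p_t^{\alpha_i}(y(i),\cdot)$, and the centering identity \eqref{eq:center} guarantees that each $\eta_t^{\alpha_i}(X_0(i),y(i);\theta_i^\ast)$ is a mean-zero random variable.

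Consequently, conditional on $X_t=y$, the sum above is an $X$-valued sum of independent centered real random variables multiplied by fixed vectors $\partial_i^{\alpha_i} f(y)\in X$.  A standard symmetrization (introducing an independent copy $X_0'$ and then Rademacher signs $\delta_i$, losing only a factor $2^p$) allows one to insert random signs, after which one can apply the definition \eqref{eq:type} of Rademacher type $p$ conditional on the $Z_i$'s; the upshot is
\[
\mb{E}\Bigl[\Bigl\|\sum_{i=1}^n \eta_t^{\alpha_i}(\cdot)\,\partial_i^{\alpha_i} f(y)\Bigr\|_X^p \,\Big|\, X_t=y\Bigr] \leq \bigl(2\T_p(X)\bigr)^p \sum_{i=1}^n \mb{E}\bigl[|\eta_t^{\alpha_i}(\cdot)|^p\,\big|\,X_t=y\bigr]\,\|\partial_i^{\alpha_i} f(y)\|_X^p.
\]
Since $p\leq 2$, Jensen reduces the conditional $p$-th moments of $\eta_t^{\alpha_i}$ to second moments, which by the uniform bound \eqref{eq:second-mom} are at most $(e^t-1)^{-1}$.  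Integrating against $\mu_{\pmb{\alpha}}(\diff y)$ and using stationarity for the quantities $\|\partial_i^{\alpha_i} f\|_{L^p(\mu_{\pmb{\alpha}};X)}^p$ yields the desired bound for $\|\ms{L}_{\pmb{\alpha}} P_t^{\pmb{\alpha}} f\|_{L^p(\mu_{\pmb{\alpha}};X)}$.

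The main conceptual obstacle is Step~3: one must use reversibility to flip the role of $X_0$ and $X_t$ so that, after conditioning on the ``later'' coordinate $X_t$, the driving coefficients become independent and centered, bringing Rademacher type into play.  The role of the auxiliary parameters $\theta_i$ in the statement of Proposition~\ref{prop:ident} is precisely to create the freedom to enforce this centering via \eqref{eq:center}, while the bound \eqref{eq:second-mom} controls the resulting variance uniformly in $y$.  Everything else (the semigroup representation, the symmetrization, and the elementary time integral) is routine.
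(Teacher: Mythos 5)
Your proposal is correct and follows essentially the same route as the paper: the semigroup representation, Jensen to pass the norm inside, reversibility to condition on the endpoint $X_t$, the centering property \eqref{eq:center} plus \cite[Proposition~9.11]{LT91} (which you re-derive via symmetrization) to bring Rademacher type into play, the variance bound \eqref{eq:second-mom}, and finally the integral $\int_0^\infty (e^t-1)^{-1/2}\,\diff t=\pi$. The only cosmetic difference is that the paper defers the choice of the minimizing $\theta_i(t)$ to the end, whereas you fix it at the outset.
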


\begin{proof}
Writing
\begin{equation}
f(x) - \int_{\{-1,1\}^n} f \,\diff\mu_{\pmb{\alpha}} = P_0^{\pmb{\alpha}}f(x) - P_\infty^{\pmb{\alpha}}f(x) = - \int_0^\infty \ms{L}_{\pmb{\alpha}} P_t^{\pmb{\alpha}}f(x)\,\diff t
\end{equation}
and using Jensen's inequality and Proposition \ref{prop:ident}, we see that for $\theta_1(t),\ldots,\theta_n(t)\in\R$,
\begin{equation} \label{eq:apply-sgp}
\begin{split}
\Bigg(\int_{\{-1,1\}^n}\Big\| &f(x)  - \int_{\{-1,1\}^n} f \,\diff\mu_{\pmb{\alpha}} \Big\|_X^p \,\diff\mu_{\pmb{\alpha}}(x) \Bigg)^{1/p}\leq \int_0^\infty \Bigg(\int_{\{-1,1\}^n}\Big\|  \ms{L}_{\pmb{\alpha}} P_t^{\pmb{\alpha}}f(x) \Big\|_X^p \,\diff\mu_{\pmb{\alpha}}(x)\Bigg)^{1/p} \,\diff t
\\ & = \int_0^\infty \Bigg(\int_{\{-1,1\}^n}\Bigg\| \mb{E}\Big[ \sum_{i=1}^n \eta_t^{\alpha_i}\big(x(i),X_t^{\pmb{\alpha}}(i);\theta_i(t)\big) \partial_i^{\alpha_i} f(X_t^{\pmb{\alpha}}) \ \Big| \ X_0^{\pmb{\alpha}}=x\Big] \Bigg\|_X^p \,\diff\mu_{\pmb{\alpha}}(x)\Bigg)^{1/p} \,\diff t
\\ & \leq \int_0^\infty \Bigg( \mb{E} \Big\| \sum_{i=1}^n \eta_t^{\alpha_i}\big(X_0^{\pmb{\alpha}}(i),X_t^{\pmb{\alpha}}(i);\theta_i(t)\big) \partial_i^{\alpha_i} f(X_t^{\pmb{\alpha}}) \Big\|_X^p \Bigg)^{1/p} \, \diff t,
\end{split}
\end{equation}
where in the last expectation $X_0^{\pmb{\alpha}}$ is distributed according to $\mu_{\pmb{\alpha}}$. Now, by the reversibility of the chain, this expectation can be written as
\begin{equation} \label{eq:use-rev}
\begin{split}
\mb{E} \Big\| \sum_{i=1}^n \eta_t^{\alpha_i}\big(X_0^{\pmb{\alpha}}(i),X_t^{\pmb{\alpha}}(i);&\theta_i(t)\big)  \partial_i^{\alpha_i} f(X_t^{\pmb{\alpha}}) \Big\|_X^p 
\\ & = \int_{\{-1,1\}^n} \sum_{y\in \{-1,1\}^n} p_t^{\pmb{\alpha}}(x,y) \Big\| \sum_{i=1}^n \eta_t^{\alpha_i}\big(y(i),x(i);\theta_i(t)\big) \partial_i^{\alpha_i} f(x)\Big\|_X^p \, \diff\mu_{\pmb{\alpha}}(x).
\end{split}
\end{equation}
Fixing $x\in\{-1,1\}^n$,  equation \eqref{eq:center} asserts that each $\eta_t^{\alpha_i} (y(i),x(i);\theta_i(t))$ is a centered random variable when $y(i)$ is distributed according to $p_t^{\alpha_i}(x(i),\cdot)$. Therefore,  as $p_t^{\pmb{\alpha}}(x,\cdot)$ is a product measure,  the Rademacher type condition for sums of centered independent random vectors (see \cite[Proposition~9.11]{LT91}) yields the bound
\begin{equation} \label{eq:use-type}
\begin{split}
& \int_{\{-1,1\}^n} \sum_{y\in \{-1,1\}^n} p_t^{\pmb{\alpha}}(x,y) \Big\| \sum_{i=1}^n \eta_t^{\alpha_i}\big(y(i),x(i);\theta_i(t)\big) \partial_i^{\alpha_i} f(x)\Big\|_X^p \, \diff\mu_{\pmb{\alpha}}(x)
\\ & \leq \big( 2\T_p(X)\big)^p \int_{\{-1,1\}^n} \sum_{i=1}^n \sum_{y(i)\in \{-1,1\}} p_t^{\alpha_i}\big(x(i),y(i)\big)\ \big|\eta_t^{\alpha_i}\big(y(i),x(i);\theta_i(t)\big)\big|^p \big\|\partial_i^{\alpha_i} f(x)\big\|_X^p \, \diff\mu_{\pmb{\alpha}}(x)
\\ & \leq \big( 2\T_p(X)\big)^p  \sum_{i=1}^n \int_{\{-1,1\}^n} \Big(\sum_{y(i)\in \{-1,1\}} p_t^{\alpha_i}\big(x(i),y(i)\big)\ \big|\eta_t^{\alpha_i}\big(y(i),x(i);\theta_i(t)\big)\big|^2\Big)^{p/2} \big\|\partial_i^{\alpha_i} f(x)\big\|_X^p \, \diff\mu_{\pmb{\alpha}}(x),
\end{split}
\end{equation}
where we also used that $p\leq2$.  Now, choosing the $\theta_i(t)$ which minimize the quantity in the left-hand side of \eqref{eq:second-mom} with bias $\alpha_i$,  and combining \eqref{eq:apply-sgp}, \eqref{eq:use-rev} and \eqref{eq:use-type}, we conclude that
\begin{equation}
\begin{split}
 \Bigg(\int_{\{-1,1\}^n}\Big\| f(x)  - & \int_{\{-1,1\}^n} f \,\diff\mu_{\pmb{\alpha}} \Big\|_X^p \,\diff\mu_{\pmb{\alpha}}(x) \Bigg)^{1/p}
\\ & \leq 2\T_p(X) \int_0^\infty  \Bigg( \sum_{i=1}^n \int_{\{-1,1\}^n} \big\| \partial_i^{\alpha_i} f(x)\big\|_X^p \,\diff\mu_{\pmb{\alpha}}(x)   \Bigg)^{1/p} \, \frac{\diff t}{\sqrt{e^t-1}},
\end{split}
\end{equation}
which is precisely the desired estimate.
\end{proof}

Equipped with the biased Poincar\'e inequality of Theorem \ref{thm:poincare}, we can conclude the proof.

\begin{proof} [Proof of Theorem \ref{thm:main}]
Let $X=(\R^d,\|\cdot\|_X)$ be a $d$-dimensional normed space and suppose that $f:\{-1,1\}^n\to X$ is a function such that
\begin{equation} \label{eq:lip-con}
\forall \ x,y\in\{-1,1\}^n, \qquad \rho(x,y) \leq \|f(x)-f(y)\|_X \leq D\rho(x,y)
\end{equation}
for some $D\geq1$.  The conclusion of the theorem follows from \cite{IVV20} when $d\geq n$ so we shall assume that $d<n$.  Let $F:[-1,1]^n\to X$ be the multilinear extension of $f$ given by \eqref{eq:extension}. Then, $F$ is clearly continuous as a polynomial and therefore, by Lemma \ref{lem:ole2},  there exists a point $z\in\ms{C}_d^n$ such that $F(z)=F(-z)$.  As $z$ has at least $n-d$ coordinates equal to 1 in absolute value we shall assume without loss of generality that $|z({d+1})|=\ldots=|z(n)|=1$ and consider the functions $h_+, h_-:\{-1,1\}^d\to X$ which are defined as
\begin{equation}
\forall \ x\in\{-1,1\}^d, \qquad h_\pm(x) = f\big( \pm x(1),\ldots, \pm x(d),\pm z(d+1),\ldots,\pm z(n)\big).
\end{equation}
Consider also the bias vector $\pmb{\alpha}_z = \big( \frac{1+z(1)}{2},\ldots,\frac{1+z(d)}{2}\big)\in(0,1)^d$ and notice that, by the multilinearity of $F$, we have the identity
\begin{equation}
\int_{\{-1,1\}^d} h_+(x)\,\diff\mu_{\pmb{\alpha}_z}(x) = F(z) = F(-z) = \int_{\{-1,1\}^d} h_-(x)\,\diff\mu_{\pmb{\alpha}_z}(x).
\end{equation}
Therefore,  by the triangle inequality and Theorem \ref{thm:poincare} we get
\begin{equation} \label{eq:main-ineq}
\begin{split}
\int_{\{-1,1\}^d}\big\|  h_+(x) & - h_-(x) \big\|_X^p \,\diff\mu_{\pmb{\alpha}_z}(x)
\\ & \leq 2^{p-1} \int_{\{-1,1\}^d}\big\| h_+(x) - F(z) \big\|_X^p + \big\| h_-(x) - F(-z) \big\|_X^p \,\diff\mu_{\pmb{\alpha}_z}(x)
\\ & \leq 2^{2p-1} \big(\pi \T_p(X)\big)^p \sum_{i=1}^d \int_{\{-1,1\}^d} \big\| \partial_i^{\frac{1+z(i)}{2}} h_+(x)\big\|_X^p+\big\| \partial_i^{\frac{1+z(i)}{2}} h_-(x)\big\|_X^p \,\diff\mu_{\pmb{\alpha}_z}(x)  .
\end{split}
\end{equation}
Now, in view of the lower Lipschitz condition \eqref{eq:lip-con}, we clearly have
\begin{equation*}
\big\|  h_+(x) - h_-(x) \big\|_X = \big\|  f\big(x(1),\ldots,x(d),z(d+1),\ldots,z(n)\big) - f\big(-x(1),\ldots,-x(d),-z(d+1),\ldots,-z(n)\big) \big\|_X \geq n
\end{equation*}
for every $x\in\{-1,1\}^d$. On the other hand, for a fixed $i\in\{1,\ldots,d\}$ and $\beta = \frac{1+z(i)}{2}$, we have
\begin{equation*}
\begin{split}
\int_{\{-1,1\}} \big\| \partial_i^\beta & h_+(x)\big\|_X^p\,\diff\mu_\beta(x(i)) = \beta \|\partial_i^\beta h_+(x(1),\ldots,1,\ldots,x(d))\|_X^p + (1-\beta) \|\partial_i^\beta h_+(x(1),\ldots,-1,\ldots,x(d))\|_X^p
\\ & = \big( \beta(1-\beta)^p + (1-\beta) \beta^p \big) \ \big\|h_+(x(1),\ldots, 1,\ldots,x(d)) - h_+(x(1),\ldots, -1,\ldots,x(d))\big\|_X^p
\leq \frac{D^p}{2^p},
\end{split}
\end{equation*}
where in the last equality we used that $p\leq 2$ along with the upper Lipschitz condition \eqref{eq:lip-con}.  The same bound also holds for $h_-$.  Integrating the last two inequalities and combining them with \eqref{eq:main-ineq}, we deduce that
\begin{equation}
n^p \leq (2\pi \T_p(X))^p d D^p,
\end{equation}
which completes the proof of the theorem.
\end{proof}

\begin{remark}
The identity of Proposition \ref{prop:ident} in the case of the uniform measure $\sigma_n$ (which was obtained in \cite{IVV20}) is simpler.  Let $\xi_1(t),\ldots,\xi_n(t)$ be i.i.d.~random variables distributed according to $\mu_{\beta(t)}$, where $\beta(t) = \frac{1+e^{-t}}{2}$. Then, for any point $x\in\{-1,1\}^n$,  the corresponding unbiased process $\{X_t(i)\}_{t\geq0}$ with $X_0=x$ has distribution equal to $x(i)\xi_i(t)$ at time $t$.  Thus applying formula \eqref{eq:identi} with $\alpha_i = \frac{1}{2}$ and $\theta_i (t)= \tfrac{e^{-t}}{2}$, we recover the usual identity
\begin{equation} \label{eq:ivv}
\forall \ x\in\{-1,1\}^n,\qquad \ms{L}P_t f(x) = - \mb{E}\Big[  \sum_{i=1}^n \frac{\xi_i(t)-e^{-t}}{e^t-e^{-t}}\cdot  \partial_i f\big(x\xi(t)\big) \Big],
\end{equation}
where $x\xi(t) = (x(1)\xi_1(t),\ldots,x(n)\xi_n(t))$, as was proven in \cite{IVV20}.
\end{remark}


\section{Proof of Theorem \ref{thm:pisier}} \label{sec:pisier}

Recall that a normed space $(X,\|\cdot\|_X)$ has cotype $q\in[2,\infty)$ with constant $C\in(0,\infty)$ if for every $n\in\N$ and $v_1,\ldots,v_n\in X$, we have
\begin{equation}
\int_{\{-1,1\}^n} \Big\| \sum_{i=1}^n x_iv_i\Big\|_X^q \,\diff\sigma_n(x) \geq \frac{1}{C^q} \sum_{i=1}^n \|v_i\|_X^q.
\end{equation}
We say that $X$ has finite cotype if it has cotype $q$ for some $q\in[2,\infty)$. 

The proof of Theorem \ref{thm:pisier} is similar to the arguments of \cite{IVV20,CE24} using as input the new identity \eqref{eq:identi} for the time derivative of the biased semigroup $\{P_t^{\pmb{\alpha}}\}_{t\geq0}$.  In view of that, we shall omit various simple details and we will be less attentive with the values of the implicit constants. 

We start by proving the (weaker) biased Pisier inequality, in which the Orlicz norm on the left hand side of the conclusions of Theorem \ref{thm:pisier} is replaced by an $L_p$ norm.

\begin{theorem} \label{thm:pisier-weak}
For every $\alpha\in(0,1)$, there exists $\msf{K}_{\alpha}\in(0,\infty)$ such that the following holds when $p\in[1,\infty)$.  For any normed space $(X,\|\cdot\|_X)$ and any $n\in\N$,  every function $f:\{-1,1\}^n\to X$ satisfies
\begin{equation} \label{eq:pw1}
\bigg\| f - \int_{\{-1,1\}^n} f\,\diff\mu_\alpha^n\bigg\|_{L_p(\mu_\alpha^n;X)} \leq \msf{K}_{\alpha}(\log n+1) \left( \int_{\{-1,1\}^n} \Big\| \sum_{i=1}^n \delta_i \partial_i^\alpha f\Big\|_{L_p(\mu_\alpha^n;X)}^p \,\diff\sigma_n(\delta)\right)^{1/p}.
\end{equation}
If additionally $X$ is assumed to be of finite cotype, then there exists $\msf{K}_{p,\alpha}(X)$ such that
\begin{equation} \label{eq:pw2}
\bigg\| f - \int_{\{-1,1\}^n} f\,\diff\mu_\alpha^n\bigg\|_{L_p(\mu_\alpha^n;X)} \leq \msf{K}_{p,\alpha}(X)  \left( \int_{\{-1,1\}^n} \Big\| \sum_{i=1}^n \delta_i \partial_i^\alpha f\Big\|_{L_p(\mu_\alpha^n;X)}^p \,\diff\sigma_n(\delta)\right)^{1/p}.
\end{equation}
\end{theorem}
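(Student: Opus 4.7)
The strategy is to transplant the heat-flow method of~\cite{IVV20, CE24} to the biased setting, using Proposition~\ref{prop:ident} in place of~\eqref{eq:ivv}. One starts from the representation
$$
f - \int f\,\diff\mu_\alpha^n = -\int_0^\infty \mathscr{L}_\alpha P_t^\alpha f\,\diff t,
$$
invokes Proposition~\ref{prop:ident} with constant bias vector $\pmb{\alpha}=(\alpha,\ldots,\alpha)$ and $\theta_i(t)=\theta^\ast(t)$ equal to the minimizer from~\eqref{eq:second-mom}, takes $L_p(\mu_\alpha^n;X)$-norms, and applies Minkowski's integral inequality. This reduces matters to controlling, for each $t>0$, the quantity
$$
J(t) = \Big(\mathbb{E}\Big\|\sum_{i=1}^n\eta_t^\alpha\big(X_0^\alpha(i), X_t^\alpha(i); \theta^\ast(t)\big)\,\partial_i^\alpha f\big(X_t^\alpha\big)\Big\|_X^p\Big)^{1/p},
$$
where $\{X_t^\alpha\}_{t\geq 0}$ is the stationary Markov chain from Section~\ref{sec:prel}.

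The analytic heart is a symmetrization-contraction bound for $J(t)$. By reversibility, conditioning on $X_t^\alpha = y\sim\mu_\alpha^n$ makes the scalars $W_i := \eta_t^\alpha(X_0^\alpha(i), y(i); \theta^\ast(t))$ conditionally independent and centered (thanks to~\eqref{eq:center}), while the vectors $V_i := \partial_i^\alpha f(y)$ are deterministic. A standard Rademacher symmetrization introducing independent signs $\delta_i$, followed by Kahane's contraction principle, produces
$$
J(t) \leq 2\,M_\alpha(t)\,\Big\|\sum_{i=1}^n\delta_i\,\partial_i^\alpha f\Big\|_{L_p(\mu_\alpha^n\otimes\sigma_n; X)},
$$
where $M_\alpha(t) = \sup_{x,y\in\{-1,1\}}|\eta_t^\alpha(x,y;\theta^\ast(t))|$ satisfies $M_\alpha(t) \lesssim_\alpha 1/t$ near $0$ and decays exponentially at infinity. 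Since $\int_0^\infty M_\alpha(t)\,\diff t$ diverges at the origin, the time integral is split at a threshold $t_0 \asymp 1/n$. On $[t_0, \infty)$ the bound above integrates to $\int_{t_0}^\infty M_\alpha(t)\,\diff t \lesssim_\alpha \log n + 1$. On $[0, t_0]$ one exploits the semigroup contractivity $\|\mathscr{L}_\alpha P_t^\alpha f\|_p \leq \|\mathscr{L}_\alpha f\|_p = \|\sum_i \partial_i^\alpha f\|_p$ together with the elementary Banach-valued estimate $\|v_i\| \leq \mathbb{E}_\delta\|\sum_j \delta_j v_j\|$ (proved by conditioning on $\delta_i$), which implies $\|\sum_i \partial_i^\alpha f\|_{L_p(\mu_\alpha^n;X)} \leq n \cdot \|\sum_i \delta_i \partial_i^\alpha f\|_{L_p(\mu_\alpha^n\otimes\sigma_n; X)}$. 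Multiplied by $t_0 = 1/n$, this contributes an $O(1)$ term. Combining both regimes proves~\eqref{eq:pw1}.

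To derive the finite-cotype improvement~\eqref{eq:pw2}, one invokes a K-convexity-type refinement following~\cite{CE24}: decomposing $f - \int f\,\diff\mu_\alpha^n$ into dyadic Fourier--Walsh levels on the biased cube and exploiting level-by-level boundedness of the Rademacher projection for spaces of finite cotype replaces the logarithmic sum in the level index by a geometric series, eliminating the $\log n$ factor. Since the biased Fourier--Walsh basis on $(\{-1,1\}^n, \mu_\alpha^n)$ shares the spectral structure of the uniform one (the generator $\mathscr{L}_\alpha$ is diagonalized by biased characters $\phi_S^\alpha$ with eigenvalue $|S|$), this step transfers from~\cite{CE24} with only cosmetic changes.

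The principal obstacle is the careful execution of the symmetrization-contraction step in the biased regime, especially at small $t$: in the unbiased case $\alpha=\tfrac{1}{2}$ the coefficients $\eta_t^{1/2}(\pm 1, x; \theta^\ast)$ are symmetric in $x$, whereas for general $\alpha$ the two possible values of $\eta_t^\alpha$ have asymmetric magnitudes whose ratio diverges as $t\to 0^+$, necessitating careful bookkeeping of $\alpha$-dependent constants and of the small-time cutoff $t_0$ in order to obtain a $p$-independent logarithmic constant in~\eqref{eq:pw1}.
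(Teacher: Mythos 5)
Your argument for \eqref{eq:pw1} is essentially the paper's: the semigroup representation, Proposition~\ref{prop:ident} with a centering parameter, reversibility to set up the conditional symmetrization via \eqref{eq:center}, the contraction principle to extract $\max|\eta_t^\alpha|\lesssim_\alpha \min\{1/t, e^{-t}\}$, and a small-time cutoff at $t_0\asymp 1/n$. The paper organizes the cutoff slightly differently (it first shows $\|f\|_{L_p}\leq e^2\|P_{1/n}^{\pmb\alpha}f\|_{L_p}$ via $\|\ms{L}_{\pmb\alpha}f\|_p\leq 2n\|f\|_p$ and then integrates from $1/n$ rather than $0$) and uses $\theta_i(t)=e^{-t}/2$ rather than the minimizer $\theta^\ast(t)$ of \eqref{eq:second-mom}, but both choices yield the same asymptotics and your variant is sound.

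The proof of \eqref{eq:pw2}, however, has a genuine gap. You propose to remove the $\log n$ by a dyadic Fourier--Walsh level decomposition combined with ``level-by-level boundedness of the Rademacher projection for spaces of finite cotype.'' But boundedness of the Rademacher projection is the \emph{definition} of K-convexity, and K-convexity is strictly stronger than finite cotype: for example, $\ell_1$ has cotype $2$ yet its Rademacher projection is unbounded, so the route you describe cannot give \eqref{eq:pw2} for all spaces of finite cotype as claimed (it would only reprove Pisier's 1986 result for K-convex targets, not the strengthening of \cite{IVV20}). You have also misattributed the finite-cotype improvement: in the unbiased case it is due to \cite{IVV20}, not \cite{CE24}, and it does not go through K-convexity. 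The paper's actual argument for \eqref{eq:pw2} instead keeps the full time integral, splits each summand according to $I(x)=\{i : x(i)=1\}$ and its complement so that the coefficients $\delta_i|\eta_t^\alpha(y(i),\pm1;\tfrac{e^{-t}}{2})|$ become genuinely i.i.d.\ within each block, and then applies the Maurey--Pisier comparison principle for i.i.d.\ coefficients in spaces of finite cotype (\cite[Proposition~9.14]{LT91}). This replaces the sup-norm $\max|\eta|\asymp 1/t$ by an $L_r$-moment $\asymp t^{-(1-1/r)}$ for some finite $r$ depending on the cotype exponent, which is integrable at the origin and hence removes the logarithm; the two blocks are then recombined using the elementary inequality $\mb{E}\|F+G\|^p\geq\max\{\mb{E}\|F\|^p,\mb{E}\|G\|^p\}$ for independent centered $F,G$. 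You would need to replace your K-convexity step by this comparison-principle argument for the proof of \eqref{eq:pw2} to go through under the stated hypothesis.
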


\begin{proof}
Let $\pmb{\alpha}=(\alpha,\ldots,\alpha)$ and without loss of generality assume that $\int f \,\diff\mu_{\pmb{\alpha}}=0$.  We start by proving the general inequality \eqref{eq:pw1}.  Notice that, by convexity, 
\begin{equation}
\big\| \ms{L}_{\pmb{\alpha}}f\big\|_{L_p(\mu_{\pmb{\alpha}};X)} \stackrel{\eqref{eq:gen}}{\leq} \sum_{i=1}^n \big\| \partial_i^\alpha f\big\|_{L_p(\mu_{\pmb{\alpha}};X)} \leq 2n \| f \|_{L_p(\mu_{\pmb{\alpha}};X)},
\end{equation}
where the last inequality follows from the definitions of $\partial_i^\alpha$. Therefore, 
\begin{equation*}
\begin{split}
\forall \ t\geq0,  \qquad \big\| e^{-t\ms{L}_{\pmb{\alpha}}} f\big\|_{L_p(\mu_{\pmb{\alpha}};X)} \leq \sum_{m=0}^\infty \frac{t^m}{m!} \big\| \ms{L}_{\pmb{\alpha}}^mf\big\|_{L_p(\mu_{\pmb{\alpha}};X)} \leq\sum_{m=0}^\infty \frac{(2nt)^m}{m!} \|f\|_{L_p(\mu_{\pmb{\alpha}};X)} = e^{2nt}\|f\|_{L_p(\mu_{\pmb{\alpha}};X)}.
\end{split}
\end{equation*}
Since $P_t^{\pmb{\alpha}} = e^{t\ms{L}_{\pmb{\alpha}}}$ for $t\geq0$,  this inequality applied to $P_t^{\pmb{\alpha}}f$ with $t=\tfrac{1}{n}$ implies that
\begin{equation}
\|f\|_{L_p(\mu_{\pmb{\alpha}};X)} \leq e^2 \big\|P_{1/n}^{\pmb{\alpha}}f\big\|_{L_p(\mu_{\pmb{\alpha}};X)}.
\end{equation}
Similarly to \eqref{eq:apply-sgp} and \eqref{eq:use-rev}, we thus have
\begin{equation}
\begin{split}
\|f&\|_{L_p(\mu_{\pmb{\alpha}};X)}\leq e^2 \big\|P_{1/n}^{\pmb{\alpha}}f\big\|_{L_p(\mu_{\pmb{\alpha}};X)}.
\\ & \leq
e^2 \int_{1/n}^\infty \left( \int_{\{-1,1\}^n} \sum_{y\in \{-1,1\}^n} p_t^{\pmb{\alpha}}(x,y) \Big\| \sum_{i=1}^n \eta_t^{\alpha}\big(y(i),x(i);\theta_i(t)\big) \partial_i^{\alpha} f(x)\Big\|_X^p \, \diff\mu_{\pmb{\alpha}}(x)\right)^{1/p} \,\diff t
\end{split}
\end{equation}
Fix $t\geq0$ and $x\in\{-1,1\}^n$.  Since $p_t^{\pmb{\alpha}}(x,\cdot)$ is a product measure,  the centering condition \eqref{eq:center} gives
\begin{equation} \label{use-cent}
\begin{split}
\sum_{y\in \{-1,1\}^n} p_t^{\pmb{\alpha}}(x,y) & \Big\| \sum_{i=1}^n \eta_t^{\alpha}\big(y(i),x(i);\theta_i(t)\big) \partial_i^{\alpha} f(x)\Big\|_X^p
\\ & \leq 2^p \int_{\{-1,1\}^n} \sum_{y\in \{-1,1\}^n} p_t^{\pmb{\alpha}}(x,y) \Big\| \sum_{i=1}^n \delta_i \big| \eta_t^{\alpha}\big(y(i),x(i);\theta_i(t)\big)\big| \partial_i^{\alpha} f(x)\Big\|_X^p \, \diff\sigma_n(\delta).
\end{split}
\end{equation}
To prove \eqref{eq:pw1}, set $\theta_i(t) = \frac{e^{-t}}{2}$ and observe that by the contraction principle \cite[Theorem~4.4]{LT91}, we can further bound this quantity by
\begin{equation}
\begin{split}
2^p \sum_{y\in \{-1,1\}^n} p_t^{\pmb{\alpha}}(x,y) & \int_{\{-1,1\}^n} \Big\| \sum_{i=1}^n \delta_i \big| \eta_t^{\alpha}\big(y(i),x(i);\tfrac{e^{-t}}{2}\big)\big| \partial_i^{\alpha} f(x)\Big\|_X^p \, \diff\sigma_n(\delta)
\\& \leq 2^p \max_{\chi,\psi\in\{-1,1\}} \big| \eta_t^{\alpha}\big(\psi,\chi;\tfrac{e^{-t}}{2}\big)\big|^p \int_{\{-1,1\}^n} \Big\| \sum_{i=1}^n \delta_i \partial_i^{\alpha} f(x)\Big\|_X^p \, \diff\sigma_n(\delta).
\end{split}
\end{equation}
It is now elementary to check that we have 
\begin{equation}
\max_{\chi,\psi\in\{-1,1\}}\big|\eta_t^{\alpha}\big(\psi,\chi;\tfrac{e^{-t}}{2}\big)\big| \lesssim_\alpha \begin{cases} \frac{1}{t}, & \mbox{for }t\in(0,1) \\ e^{-t}, & \mbox{for } t\geq1 \end{cases}
\end{equation}
and thus combining all the above we get
\begin{equation}
\|f\|_{L_p(\mu_{\pmb{\alpha}};X)} \lesssim_{\alpha} \left( \int_{1/n}^1 \frac{\diff t}{t} + \int_1^\infty e^{-t}\,\diff t \right)\cdot\left( \int_{\{-1,1\}^n} \Big\| \sum_{i=1}^n \delta_i \partial_i f\Big\|_{L_p(\mu_{\pmb{\alpha}};X)}^p \,\diff\sigma_n(\delta)\right)^{1/p},
\end{equation}
which readily concludes the proof of \eqref{eq:pw1}.

For the proof of \eqref{eq:pw2},  we combine \eqref{eq:apply-sgp} and \eqref{eq:use-rev} with \eqref{use-cent} to get
\begin{equation*}
\|f\|_{L_p(\mu_{\pmb{\alpha}};X)}\leq
2 \int_{0}^\infty \left( \int_{\{-1,1\}^{2n}}\sum_{y\in \{-1,1\}^n} p_t^{\pmb{\alpha}}(x,y) \Big\| \sum_{i=1}^n \delta_i \big| \eta_t^{\alpha}\big(y(i),x(i);\tfrac{e^{-t}}{2}\big)\big| \partial_i^{\alpha} f(x)\Big\|_X^p \, \diff\sigma_n(\delta) \diff\mu_{\pmb{\alpha}}(x)\right)^{1/p}\!\!\!\! \diff t.
\end{equation*}
Fix $x\in\{-1,1\}^n$ and $t\geq0$.  Denoting by $I(x)= \{i:\  x(i)=1\}$,  the inner term is bounded above by
\begin{equation} \label{eq:break-to-iid}
\begin{split}
 \int_{\{-1,1\}^n} & \sum_{y\in \{-1,1\}^n}  p_t^{\pmb{\alpha}}(x,y) \Big\| \sum_{i=1}^n \delta_i \big| \eta_t^{\alpha}\big(y(i),x(i);\tfrac{e^{-t}}{2}\big)\big| \partial_i^{\alpha} f(x)\Big\|_X^p \, \diff\sigma_n(\delta)
\\ & \leq 2^{p-1} \int_{\{-1,1\}^{I(x)}} \sum_{y\in \{-1,1\}^{I(x)}} p_t^{\pmb{\alpha}}(x,y) \Big\| \sum_{i\in I(x)} \delta_i \big| \eta_t^{\alpha}\big(y(i),1;\tfrac{e^{-t}}{2}\big)\big| \partial_i^{\alpha} f(x)\Big\|_X^p \, \diff\sigma_{I(x)}(\delta)
\\ & + 2^{p-1}  \int_{\{-1,1\}^{I(x)^c}}  \sum_{y\in \{-1,1\}^{I(x)^c}} p_t^{\pmb{\alpha}}(x,y) \Big\| \sum_{i\notin I(x)} \delta_i \big| \eta_t^{\alpha}\big(y(i),-1;\tfrac{e^{-t}}{2}\big)\big| \partial_i^{\alpha} f(x)\Big\|_X^p\, \diff\sigma_{I(x)^c}(\delta).
\end{split}
\end{equation}
When $y\in\{-1,1\}^{I(x)}$ is distributed according to $p_t^{\pmb{\alpha}}(x,\cdot)$ and $\delta$ is a uniformly random sign, the random variables $\delta_i|\eta_t^{\alpha}(y(i),1;\tfrac{e^{-t}}{2})|$,  $i\in I(x)$, are independent and identically distributed.  Therefore, standard comparison principles going back to works of Maurey and Pisier (see, e.g., \cite[Proposition~9.14]{LT91}) show that if $X$ has cotype $q<\infty$ and $r> \max\{p, q\}$, then
\begin{equation} \label{eq:Ix}
\begin{split}
\int_{\{-1,1\}^{I(x)}} &\sum_{y\in \{-1,1\}^{I(x)}}  p_t^{\pmb{\alpha}}(x,y) \Big\| \sum_{i\in I(x)} \delta_i \big| \eta_t^{\alpha}\big(y(i),1;\tfrac{e^{-t}}{2}\big)\big| \partial_i^{\alpha} f(x)\Big\|_X^p \, \diff\sigma_{I(x)}(\delta)
\\ & \lesssim_{r,X}  \Big( \sum_{\psi\in\{-1,1\}}p_t^{{\alpha}} \big(1,\psi;\tfrac{e^{-t}}{2}\big) \big| \eta_t^{\alpha}\big(\psi,1;\tfrac{e^{-t}}{2}\big)\big|^r \Big)^{p/r} \int_{\{-1,1\}^{I(x)}}  \Big\| \sum_{i\in I(x)} \delta_i \partial_i^{\alpha} f(x)\Big\|_X^p\, \diff\sigma_{I(x)}(\delta).
\end{split}
\end{equation}
It is again elementary to show that
\begin{equation} \label{rmom}
\max_{\chi\in\{-1,1\}} \Big( \sum_{\psi\in\{-1,1\}}p_t^{{\alpha}} \big(\chi,\psi;\tfrac{e^{-t}}{2}\big) \big| \eta_t^{\alpha}\big(\psi,\chi;\tfrac{e^{-t}}{2}\big)\big|^r \Big)^{1/r} \lesssim_{r,\alpha} \begin{cases} \frac{1}{t^{1-\frac1r}}, & \mbox{for }t\in(0,1) \\ e^{-t}, & \mbox{for } t\geq1 \end{cases}.
\end{equation}
In view of the convergence of the integrals
\begin{equation}
 \int_0^1 \frac{\diff t}{t^{1-\frac1r}} + \int_1^\infty e^{-t}\,\diff t  \lesssim_r 1,
\end{equation}
combining \eqref{eq:Ix}, the corresponding estimate on $I(x)^c$ and \eqref{eq:break-to-iid}, we finally get
\begin{equation*}
\begin{split}
\|f\|_{L_p(\mu_{\pmb{\alpha}};X)} \lesssim_{r,\alpha,X}   \left(\int_{\{-1,1\}^n} \int_{\{-1,1\}^n} \Big\| \sum_{i\in I(x)} \delta_i\partial_i^{\alpha} f(x)\Big\|_X^p + \Big\| \sum_{i\notin I(x)} \delta_i  \partial_i^{\alpha} f(x)\Big\|_X^p \,\diff\sigma_n(\delta) \, \diff\mu_{\pmb{\alpha}(x)} \right)^{1/p}.
\end{split}
\end{equation*}
Now, if $F,G$ are two independent centered $X$-valued random vectors, then
\begin{equation}
\mb{E}\|F+G\|_X^p \geq \max\big\{ \mb{E}_F \|F+\mb{E}_G[G]\|_X^p, \mb{E}_G\|\mb{E}_F[F]+G\|_X^p\big\} = \max\big\{ \mb{E}\|F\|_X^p, \mb{E}\|G\|_X^p\big\}.
\end{equation}
Therefore, 
\begin{equation}
\begin{split}
  \Bigg(\int_{\{-1,1\}^n} \int_{\{-1,1\}^n} \Big\| \sum_{i\in I(x)} \delta_i\partial_i^{\alpha} f(x)\Big\|_X^p  + \Big\| \sum_{i\notin I(x)} & \delta_i  \partial_i^{\alpha} f(x)\Big\|_X^p \,\diff\sigma_n(\delta) \, \diff\mu_{\pmb{\alpha}(x)} \Bigg)^{1/p}
\\ & \lesssim  \left(\int_{\{-1,1\}^n} \Big\| \sum_{i=1}^n \delta_i\partial_i^{\alpha} f(x)\Big\|_{L_p(\mu_{\pmb{\alpha}};X)}^p  \,\diff\sigma_n(\delta) \right)^{1/p}
\end{split}
\end{equation}
and this concludes the proof of the theorem.
\end{proof}

We now proceed to discuss the vector-valued $L_p$ logarithmic Sobolev inequality of Theorem \ref{thm:pisier}.  Recall that given a function $f:(\Omega,\mu)\to X$,  we define the $L_p(\log L)^a$ Orlicz norm of $f$ as
\begin{equation}
\|f\|_{L_p(\log L)^a(\mu;X)} = \inf\left\{ \gamma>0: \ \int_\Omega \tfrac{\|f(\omega)\|_X^p}{\gamma^p} \ \log^a\Big(e+\tfrac{\|f(\omega)\|_X^p}{\gamma^p}\Big) \,\diff\mu(\omega) \leq 1\right\}.
\end{equation}
In the case of the unbiased cube,  Theorem \ref{thm:pisier} was proven recently in \cite{CE24} and the proof used as a black box the unbiased analogue of Theorem \ref{thm:pisier-weak} which is due to \cite{Pis86,IVV20}. As the modifications which are required to derive Theorem \ref{thm:pisier} from Theorem \ref{thm:pisier-weak} in the biased case are almost mechanical,  we shall only offer a high-level description of the proof.

\begin{proof}[Sketch of the proof of Theorem \ref{thm:pisier}]
Given a scalar-valued function $h:\{-1,1\}^n\to\R$, we denote by $\msf{M}h:\{-1,1\}^n\to\R_+$ the asymmetric gradient of $h$, given by
\begin{equation}
\forall \ x\in\{-1,1\}^n,\qquad \msf{M}h(x) = \bigg( \sum_{i=1}^n \partial_ih(x)_+^2\bigg)^{1/2},
\end{equation}
where $a_+=\max\{a,0\}$ for $a\in\R$.  A combination of the triangle inequality with a technical result of Talagrand \cite[Proposition~5.1]{Tal93} on the biased cube implies that if a vector-valued function $f:\{-1,1\}^n\to X$ satisfies $\int f \,\diff\mu_\alpha^n=0$, then
\begin{equation}
\|f\|_{L_p(\log L)^{p/2}(\mu_\alpha^n;X)} \lesssim_{p,\alpha} \big\| \msf{M} \|f\|_X \big\|_{L_p(\mu_\alpha^n;X)} + \|f\|_{L_1(\mu_\alpha^n;X)}.
\end{equation}
The second term can be controlled by the right hand side of \eqref{eq:p1} or \eqref{eq:p2} using the results of Theorem \ref{thm:pisier-weak}.
 For the first term, we use the key pointwise inequality of \cite{CE24}, asserting that
 \begin{equation}
 \forall \ x\in\{-1,1\}^n,\qquad \msf{M}\|f\|_X(x) \leq \sqrt{2}\Bigg( \int_{\{-1,1\}^n} \Big\| \sum_{i=1}^n \delta_i \partial_i f(x)\Big\|_X^p\, \diff\sigma_n(\delta) \Bigg)^{1/p}.
 \end{equation}
To further upper bound this by the square function where $\partial_i$ is replaced by $\partial_i^\alpha$, observe that
\begin{equation}
\partial_i^\alpha f(x) \in\big\{ 2\alpha \partial_i f(x),  2(1-\alpha) \partial_if(x)\big\}
\end{equation}
and thus the contraction principle and integration in $x$ yield the desired conclusions.
\end{proof}


\section{Proof of Theorem \ref{thm:stable}} \label{sec:5}

In this section we shall prove the equivalence of stable type and metric stable type for norms.  The key ingredient in the proof is the following characterization of stable type (see, e.g., \cite[Proposition~9.12 (iii)]{LT91}) which goes back at least to \cite{MP84}.

\begin{lemma} \label{lem:lt}
A Banach space $(X,\|\cdot\|_X)$ has stable type $p\in[1,2)$ if and only if there exists a constant $\msf{ST}_p(X)\in(0,\infty)$ such that for every $n\in\N$ and every vectors $v_1,\ldots,v_n\in X$, we have
\begin{equation}  \label{eq:lt}
\int_{\{-1,1\}^n} \Big\| \sum_{i=1}^n x_i v_i\Big\|_X^p \,\diff\sigma_n(x) \leq \msf{ST}_p(X)^p \big\|\big(\|v_1\|_X,\ldots,\|v_n\|_X\big)\big\|_{\ell_{p,\infty}^n}^p.
\end{equation}
\end{lemma}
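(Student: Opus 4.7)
Plan: Both directions hinge on the LePage series representation of a standard symmetric $p$-stable law, $\theta \stackrel{d}{=} c_p \sum_{k\geq 1} \epsilon_k \Gamma_k^{-1/p}$, where $\Gamma_k = E_1 + \cdots + E_k$ with $E_j$ i.i.d.\ standard exponential, $(\epsilon_k)$ are i.i.d.\ Rademacher independent of $(\Gamma_k)$, and $c_p > 0$ is an explicit constant.

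For $\eqref{eq:lt} \Rightarrow$ stable type $p$, I would represent each of $\theta_1,\ldots,\theta_n$ by an independent copy of this series, obtaining $\sum_i \theta_i v_i \stackrel{d}{=} c_p \sum_\alpha \epsilon_\alpha' w_\alpha$, where $(w_\alpha) = (\Gamma_{j,i}^{-1/p} v_i)_{j,i}$ and the Rademacher signs $(\epsilon_\alpha')$ are independent of $(w_\alpha)$. Conditioning on $(w_\alpha)$ and applying \eqref{eq:lt} to finite truncations of the sign sum, monotone convergence yields $\mb{E}_{\epsilon'}\|\sum_\alpha \epsilon_\alpha' w_\alpha\|_X^p \leq \msf{ST}_p(X)^p \|(\|w_\alpha\|_X)\|_{\ell_{p,\infty}}^p$. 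By the mapping theorem for Poisson processes, $\{\|w_\alpha\|_X\}_\alpha$ is a Poisson point process on $(0,\infty)$ with intensity $p(\sum_i \|v_i\|_X^p)\, x^{-p-1}\, dx$, so its decreasing rearrangement has the same distribution as $((\sum_i \|v_i\|_X^p)^{1/p} \Gamma_k^{-1/p})_{k\geq 1}$. Consequently $\|(\|w_\alpha\|_X)\|_{\ell_{p,\infty}}^p \stackrel{d}{=} (\sum_i \|v_i\|_X^p)\cdot \sup_{k\geq 1} k/\Gamma_k$; since the dominant contribution $1/\Gamma_1$ has tail $\mb{P}(1/\Gamma_1 > u) = 1 - e^{-1/u} \lesssim 1/u$, the variable $\sup_k k/\Gamma_k$ lies in $L^s$ for every $s < 1$. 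Taking $s = r/p < 1$ for any $r < p$, Jensen's inequality from the $p$-th to the $r$-th conditional moment, followed by integration in $(w_\alpha)$, establishes stable type $p$ with constant $c_p\,\msf{ST}_p(X)\,\mb{E}[(\sup_k k/\Gamma_k)^{r/p}]^{1/r}$.

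For stable type $p \Rightarrow \eqref{eq:lt}$, I would invoke the classical result of Maurey and Pisier (\cite{Pis74a,Pis74b}, cited at the start of the excerpt) that a normed space has stable type $p\in[1,2)$ if and only if it has Rademacher type $p+\e$ for some $\e > 0$. After sorting so that $\|v_1\|_X \geq \cdots \geq \|v_n\|_X$ and setting $M = \|(\|v_i\|_X)_i\|_{\ell_{p,\infty}^n}^p$, one has the deterministic bound $\|v_i\|_X^p \leq M/i$. Since $(p+\e)/p > 1$, the series $\sum_i i^{-(p+\e)/p}$ converges, hence $\sum_i \|v_i\|_X^{p+\e} \leq C_{\e,p}\, M^{(p+\e)/p}$. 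The Rademacher type $p+\e$ inequality then gives $\mb{E}_x\|\sum x_i v_i\|_X^{p+\e} \leq \T_{p+\e}(X)^{p+\e}\, C_{\e,p}\, M^{(p+\e)/p}$, and Jensen's inequality for the concave map $t \mapsto t^{p/(p+\e)}$ yields \eqref{eq:lt}.

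The main obstacle is the first direction: justifying the conditional application of \eqref{eq:lt} to the rearranged series $\sum_\alpha \epsilon_\alpha' w_\alpha$, together with the weak-$\ell_p$ moment computation for a Poisson point process, requires a truncation argument paired with a.s.\ convergence of the LePage partial sums in $X$ (which itself uses Kolmogorov's three-series theorem fiberwise in the Rademacher signs). Once the Maurey--Pisier improvement from stable type $p$ to Rademacher type $p+\e$ is taken for granted, the second direction reduces to the short calculation above exploiting the summability gap between weak-$\ell_p$ and $\ell_{p+\e}$.
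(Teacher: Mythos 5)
Your proposal is correct, but note that the paper does not actually prove Lemma \ref{lem:lt}: it is imported as a known characterization of stable type, with a citation to \cite[Proposition~9.12\,(iii)]{LT91} and to \cite{MP84}. What you have written is essentially a reconstruction of the classical argument behind that citation, and both directions hold up. In the direction \eqref{eq:lt} $\Rightarrow$ stable type $p$, the LePage representation plus the Poisson mapping theorem is exactly the standard route; the conditional application of \eqref{eq:lt} that you flag as the main obstacle is indeed fine, because conditionally on the $\Gamma$'s all the vectors $w_\alpha$ lie in the finite-dimensional span of $v_1,\ldots,v_n$, so fiberwise a.s.\ convergence of $\sum_\alpha \epsilon_\alpha' w_\alpha$ reduces to the scalar fact $\sum_j \Gamma_{j,i}^{-2/p}<\infty$ a.s., and Fatou's lemma passes the truncated inequality to the full series. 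One small point to tighten: to see that $\sup_k k/\Gamma_k$ has tails $\lesssim 1/u$ (hence lies in $L^s$ for all $s<1$) you need the union bound $\sum_k \mb{P}(\Gamma_k< k/u)\leq \sum_k (k/u)^k/k!\lesssim 1/u$; the $k=1$ term alone only gives a matching lower bound on the tail. In the converse direction you route through the Maurey--Pisier upgrade ``stable type $p$ $\Rightarrow$ Rademacher type $p+\e$'' (which the paper itself quotes from \cite{Pis74a,Pis74b}) and then exploit the summability gap between weak-$\ell_p$ and $\ell_{p+\e}$ via $\|v_i\|_X^p\leq M/i$; this is correct and pleasantly short, though it leans on a deep theorem, whereas the proof in \cite{LT91} is more self-contained, comparing Rademacher and stable averages directly through the same $\Gamma_k^{-1/p}$ order-statistics picture. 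So the trade-off is: your argument buys brevity by reusing an equivalence the paper already invokes, while the cited proof avoids it at the cost of a longer comparison-principle argument.
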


\begin{proof}[Proof of Theorem \ref{thm:stable}]
It is clear from Lemma \ref{lem:lt} that any normed space with metric stable type $p$ also has stable type $p$.  For the converse implication,  we treat the case $p=1$ separately.  Assuming that $X$ has stable type 1,  it follows from \cite{Pis74b} that it also has nontrivial Rademacher type and thus (in view of \cite{Pis73}) finite cotype.  Therefore, by the $X$-valued Pisier inequality with a dimension-free constant of \cite{IVV20}, we get
\begin{equation}
\begin{split}
\int_{\{-1,1\}^n} \big\|f(x)-f(-x)\big\|_X\,\diff\sigma_n(x) & \leq 2\int_{\{-1,1\}^n} \Big\| f(x) - \int_{\{-1,1\}^n} f\,\diff\sigma_n\Big\|_X \,\diff\sigma_n(x)
\\ & \lesssim_{X}  \int_{\{-1,1\}^{n}}\int_{\{-1,1\}^{n}} \mb{E} \Big\|\sum_{i=1}^{n} \delta_i \partial_i f(x)\Big\|_X \, \diff\sigma_n(\delta) \diff\sigma_{n}(x).
\end{split}
\end{equation}
Applying Lemma \ref{lem:lt} conditionally on $x\in\{-1,1\}^n$, we can further bound this quantity as
\begin{equation}
\begin{split}
 \int_{\{-1,1\}^{n}}\int_{\{-1,1\}^{n}} \mb{E} \Big\|\sum_{i=1}^{n} & \delta_i \partial_i f(x)\Big\|_X \,  \diff\sigma_n(\delta) \diff\sigma_{n}(x)
\\ & \leq \msf{ST}_1(X)  \int_{\{-1,1\}^n} \big\|\big(\|\partial_1f(x)\|_X,\ldots,\|\partial_nf(x)\|_X\big)\big\|_{\ell_{1,\infty}^n} \, \diff\sigma_n(x)
\end{split}
\end{equation}
and this proves the converse implication since $\mf{d}_if(x) =  \|\partial_if(x)\|_X$.

While this proof extends to all values of $p$,  in the case $p>1$ we present a more cumbersome argument which avoids the $X$-valued Pisier inequality and thus gives better dependence on parameters of $X$.  For $t\geq0$,  let $\xi_1(t),\ldots,\xi_n(t)$ be i.i.d.~random variables distributed according to $\mu_{\beta(t)}$, where $\beta(t)=\tfrac{1+e^{-t}}{2}$,  and denote by $\eta_i(t) = \frac{\xi_i(t)-e^{-t}}{e^t-e^{-t}}$.  Then,  it follows from the semigroup argument leading to \eqref{eq:apply-sgp} along with identity \eqref{eq:ivv} of \cite{IVV20} and Jensen's inequality that
\begin{equation}
\begin{split}
\Bigg(\int_{\{-1,1\}^n} \big\|f(x)-f(-x)\big\|_X^p\,\diff\sigma_n(x)\Bigg)^{1/p} & \leq 2\Bigg(\int_{\{-1,1\}^n} \Big\| f(x) - \int_{\{-1,1\}^n} f\,\diff\sigma_n\Big\|_X^p \,\diff\sigma_n(x)\Bigg)^{1/p}
\\ & \leq 2 \int_0^\infty \Bigg( \int_{\{-1,1\}^n} \mb{E} \Big\|\sum_{i=1}^n \eta_i(t) \partial_i f(x)\Big\|_X^p \, \diff\sigma_n(x)\Bigg)^{1/p} \,\diff t
\end{split}
\end{equation}

Fixing $x\in\{-1,1\}^n$,  the independent random vectors $\eta_1(t)\partial_1f(x),\ldots,\eta_n(t)\partial_nf(x)$ are centered.  Thus, by standard symmetrization estimates, we can further bound the last term by
\begin{equation}
\begin{split}
2 \int_0^\infty \Bigg( \int_{\{-1,1\}^n} \mb{E} \Big\|\sum_{i=1}^n \eta_i(t) \partial_i f(x)\Big\|_X^p& \, \diff\sigma_n(x)\Bigg)^{1/p} \,\diff t
\\ & \leq 4 \int_0^\infty \Bigg( \int_{\{-1,1\}^n} \mb{E} \Big\|\sum_{i=1}^n \delta_i \eta_i(t) \partial_i f(x)\Big\|_X^p \, \diff\sigma_n(x)\Bigg)^{1/p} \,\diff t,
\end{split}
\end{equation}
where the  expectation on the right hand side is with respect to $(\delta_i,\eta_i(t))$, where the $\delta_i$ are uniformly random signs, independent of the $\eta_i(t)$. Therefore, conditioning first on the values of $\eta_1(t),\ldots,\eta_n(t)$ and using Lemma \ref{lem:lt} for the integrand, we get
\begin{equation}
 \mb{E} \Big\|\sum_{i=1}^n \delta_i\eta_i(t) \partial_i f(x)\Big\|_X^p \leq \msf{ST}_p(X)^p \mb{E} \bigg\|\sum_{i=1}^n \eta_i(t) \big\| \partial_if(x)\big\|_X e_i \bigg\|_{\ell_{p,\infty}^n}^p,
\end{equation}
where $\{e_1,\ldots,e_n\}$ is the standard basis of $\R^n$. It is well-known that, since $p>1$,  $\ell_{p,\infty}^n$ is isomorphic to a normed space up to constants depending only on $p$. Moreover,  it has finite cotype with a constant independent of $n$ (for instance because it can be realized as a real interpolation space between $\ell_1^n$ and $\ell_2^n$, see \cite[Section~5.3]{BL76}).  Therefore,  using again the comparison principle \cite[Proposition~9.14]{LT91}, we have
\begin{equation}
\begin{split}
\mb{E} \bigg\|\sum_{i=1}^n \eta_i(t) \big\| \partial_if(x)\big\|_X e_i \bigg\|_{\ell_{p,\infty}^n}^p&  \lesssim_{p} \|\eta_1(t)\|_{L_r}^p \mb{E} \bigg\|\sum_{i=1}^n \delta_i \big\| \partial_if(x)\big\|_X e_i \bigg\|_{\ell_{p,\infty}^n}^p 
\\ & =  \|\eta_1(t)\|_{L_r}^p \big\|\big(\|\partial_1f(x)\|_X,\ldots,\|\partial_nf(x)\|_X\big)\big\|_{\ell_{p,\infty}^n}^p
\end{split}
\end{equation}
for any $r>\max\{r',p\}$, where $r'$ is the cotype of $\ell_{p,\infty}^n$.  Combining all the above along with the fact that $t\mapsto \|\eta_1(t)\|_{L_r}$ is integrable for any $r<\infty$,  we conclude the proof.  It is worth emphasizing that this proof shows additionally that the metric stable type constant of $X$ is proportional to the parameter $\msf{ST}_p(X)$ of \eqref{eq:lt} up to constants depending only on $p$, provided that $p>1$.
\end{proof}

Finally, we present the simple proof of the distortion bound \eqref{eq:dist}.

\begin{proof}[Proof of Proposition \ref{prop:dist}]
Suppose that $(\MM,d_\MM)$ has metric stable type $p$ with constant $S$ and suppose that a function $f:\{-1,1\}^n\to\MM$ satisfies
\begin{equation} \label{xx}
\forall \ x,y\in\{-1,1\}^n,\qquad s \rho_{\bf w}(x,y) \leq d_\MM\big(f(x),f(y)\big) \leq sD \rho_{\bf w}(x,y)
\end{equation} 
for some constants $s>0$ and $D\geq1$. Combined with the stable type assumption, this gives
\begin{equation}
\begin{split}
s^p \|{\bf w}\|_{\ell_1^n}^p  \stackrel{\eqref{xx}}{\leq} \int_{\{-1,1\}^n}  d_\MM\big( f(x)&,f(-x)\big)^p \,\diff\sigma_n(x) \leq S^p  \int_{\{-1,1\}^n} \big\| \big(\mf{d}_1f(x),\ldots,\mf{d}_nf(x)\big)\big\|^p_{\ell_{p,\infty}^n} \,\diff\sigma_n(x)
\\ &\stackrel{\eqref{xx}}{\leq} (sSD)^p  \int_{\{-1,1\}^n} \| (w_1,\ldots,w_n)\|^p_{\ell_{p,\infty}^n} \,\diff\sigma_n(x) = (sSD)^p \|{\bf w}\|_{\ell_{p,\infty}^n}^p.
\end{split}
\end{equation}
Rearranging gives the desired lower bound \eqref{eq:dist} for the distortion $D$.
\end{proof}


\section{Discussion and open problems} \label{sec:disc}

\noindent {\bf 1.} The proof of Theorem \ref{thm:main} in fact implies a Poincar\'e-type inequality for \emph{restrictions} of  functions $f:\{-1,1\}^n\to X$ if $\dim(X)<n$, which in turn yields the refined distortion lower bounds. An inspection of the argument reveals that for every such $f$ there exists a subset $\sigma\subseteq\{1,\ldots,n\}$ with $|\sigma| \leq \dim(X)$, a point $w\in\{-1,1\}^{\sigma^c}$ and a bias vector $\pmb{\alpha} = (\alpha_i)_{i\in\sigma}\in(0,1)^\sigma$ such that
\begin{equation} \label{eq:real-conclusion}
\begin{split}
\int_{\{-1,1\}^\sigma} \big\| f(x,w)-& f(-x,-w)\big\|_X^p \,\diff\mu_{\pmb{\alpha}}(x)
\\ & \leq 2^{2p-1} \big(\pi \T_p(X)\big)^p \sum_{i\in\sigma} \int_{\{-1,1\}^\sigma} \big\| \partial_i^{\alpha_i} f(x,w)\big\|_X^p + \big\|\partial_i^{\alpha_i}f(-x,-w)\big\|_X^p \,\diff\mu_{\pmb{\alpha}}(x).
\end{split}
\end{equation}

\noindent {\bf 2.} Such refinements of Poincar\'e-type inequalities for topological reasons had not been exploited since Oleszkiewicz's original work \cite{Ole96}. The last decades have seen the development of many metric inequalities on graphs which yield nonembeddability results into normed spaces. We believe that investigating whether the distortion estimates which one obtains this way can be further improved assuming upper bounds for the dimension of the target space is a very worthwhile research program. As examples, we mention the nonembeddability of graphs with large girth into uniformly smooth spaces \cite{LMN02,NPSS06}, of $\ell_\infty$-grids into spaces of finite cotype \cite{MN08} and of trees \cite{Bou86,LNP09} and diamond graphs \cite{JS09,EMN23} into uniformly convex spaces.

\medskip

\noindent {\bf 3.} The results of \cite{IVV20} in fact imply that any Lipschitz embedding of $\{-1,1\}^n$ into a normed space of Rademacher type $p$ incurs $p$-{\it average} distortion at least a constant multiple of $\T_p(X)^{-1} n^{1-1/p}$. It would be interesting to understand whether the bound of Theorem \ref{thm:main} can be extended to average distortion embeddings beyond bi-Lipschitz ones.

\medskip

\noindent {\bf 4.} The Poincar\'e inequality \eqref{eq:real-conclusion} implies that any $f:\{-1,1\}^n\to X=(\R^d,\|\cdot\|_X)$ satisfies
\begin{equation} \label{eq:edge-dist}
\sup_{(x,y): \ \rho(x,y)=n} \frac{n}{\|f(x)-f(y)\|_X} \sup_{(x,y): \ \rho(x,y)=1} \|f(x)-f(y)\|_X \gtrsim \frac{n}{\T_p(X) d^{1/p}},
\end{equation}
provided that $d<n$.  This is stronger than the lower bound of Theorem \ref{thm:main} as it implies that $f$ incurs large distortion only on \emph{specific} pairs of points, namely those which are either antipodal or connected by an edge.  Moreover though, \eqref{eq:edge-dist} is sharp for any value of $d$ and $p$.  Indeed, assume without loss of generality that $\tfrac{n}{d}$ is an odd integer and consider a partition  $I_1,\ldots,I_d$ of $\{1,\ldots,n\}$ in $d$ parts of size $\tfrac{n}{d}$.  Then, the mapping $f:\{-1,1\}^n\to\ell_p^d$, where $p\in[1,2]$, given by
\begin{equation}
\forall \ x\in\{-1,1\}^n, \qquad f(x) \eqdef \Big( \sum_{i\in I_1} x_i, \ldots, \sum_{i\in I_d} x_i\Big)
\end{equation}
satisfies
\begin{equation}
\inf_{(x,y): \ \rho(x,y)=n}\big\|f(x)-f(y)\big\|_{\ell_p^d} = \inf_{x\in\{-1,1\}^n} \bigg( \sum_{k=1}^d \Big| \sum_{i\in I_k} x_i\Big|^p\bigg)^{1/p} = d^{1/p}
\end{equation}
and $\|f(x)-f(y)\|_{\ell_p^d}=1$ when $\rho(x,y)=1$.

\medskip

\noindent {\bf 5.} The functional inequality \eqref{eq:real-conclusion} in fact implies that if $\theta\in(0,1)$,  then the bi-Lipschitz distortion of the $\theta$-snowflake of $\{-1,1\}^n$ into a finite dimensional normed space $X$ satisfies
\begin{equation}
\msf{c}_X\big(\{-1,1\}^n, \rho^\theta\big) \gtrsim \frac{n^\theta}{\T_p(X) \min\{n,\dim(X)\}^{1/p}}.
\end{equation}

\medskip

\noindent {\bf 6.} As the reasons behind the impossibility of dimension reduction of Theorem \ref{thm:main} are partly topological, it is natural to ask in what other settings can one deduce similar conclusions. For instance, do $d$-dimensional manifolds of nonpositive or nonnegative curvature (which have Enflo type 2, see \cite{Oht09}) admit the same distortion bounds as normed spaces of type 2?

\medskip

\noindent {\bf 7.} In this paper we extended the semigroup machinery developed in \cite{IVV20} to the biased cube, driven by the geometric application obtained in Theorem \ref{thm:main}.  As an aside, this led to biased versions of the vector-valued Poincar\'e and logarithmic Sobolev inequalities of \cite{IVV20,CE24}. The same reasoning also yields biased extensions of the vector-valued versions of Talagrand's influence inequality \cite{Tal94} which were studied in \cite{CE23} for the uniform probability measure on the hypercube.  Moreover,  combining the biased semigroup machinery with the arguments of \cite[Section~2.5]{BIM23}, one can derive biased versions of the isoperimetric-type inequalities of Eldan and Gross \cite{EG22}.  As these extensions are mostly mechanical given the material of this paper and in lack of a specific application which may follow from them, we omit them.

\bibliographystyle{plain}
\bibliography{flow-applications}

\end{document}